\newtheorem{lemma}{Lemma}
\newtheorem{theorem}{Theorem}
\DeclareMathOperator{\re}{Re}
\DeclareMathOperator{\im}{Im}
\DeclareMathOperator{\interior}{int}
\DeclareMathOperator{\diag}{diag}
\DeclareMathOperator{\bez}{Bez}
\title{Finite Blaschke products with prescribed critical points,
	Stieltjes polynomials, and moment problems}
\author{Gunter Semmler and Elias Wegert}
\date{}
\begin{document}

\maketitle

%-----------------------------------------------------------------------------

\begin{abstract}
The determination of a finite Blaschke product from its critical
points is a well-known problem with interrelations to several other
topics. Though existence and uniqueness of solutions are established
for long, we present new aspects which have not yet been explored to
their full extent. In particular, we show that the following three
problems are equivalent: (i) determining a finite Blaschke product
from its critical points, (ii) finding the equilibrium position of
moveable point charges interacting with a special configuration of
fixed charges, and (iii) solving a moment problem for the canonical
representation of power moments on the real axis. These equivalences
are not only of theoretical interest, but also open up new
perspectives for the design of algorithms. For instance, the second
problem is closely linked to the determination of certain Stieltjes
and Van Vleck polynomials for a second order ODE and characterizes
solutions as global minimizers of an energy functional.

\end{abstract}

%-----------------------------------------------------------------------------

\section{Introduction} \label{sec0}

A \emph{finite Blaschke product} of degree $n$  is a rational
function of the form
\[
B(z)=c\prod_{k=1}^n\frac{z-a_k}{1-\overline{a}_kz}
\]
whose (not necessarily distinct) zeros $a_1,\dots, a_n$ are in the unit disc $\mathbb D$ and
$c\in \mathbb T:=\partial \mathbb D$. A point $\xi$ where
$B'(\xi)=0$ is called \emph{critical point} of the Blaschke product.
Every Blaschke product of degree $n$ has exactly $n-1$ critical points
$\xi_k$ in $\mathbb D$ (counted with multiplicities), and another
$n-1$ critical points $1/\overline{\xi}_k$, symmetric to $\xi_k$
with respect to the unit circle $\mathbb T$ (see e.g. \cite{Mash}). Note that if $a_j$ is a $k$-fold zero then $a_j$ and $1/\overline{a}_j$ are both   criticals points of order $k-1$, and if $0$ is a critical point then we  also count  $\infty$ as a critical point of the same multiplicity.

While it is straightforward to determine the critical points of
a Blaschke product from its zeros (by computing the zeros of a polynomial),
the reverse problem is much more challenging.
The basic existence and uniqueness result is summarized in the following
theorem.

\begin{theorem}\label{Th1}
Let $\xi_1,\dots, \xi_{n-1}$ be $n-1$  points in
$\mathbb D$. Then there is a Blaschke product $B$ of degree $n$ with
critical points $\xi_k$. $B$ is unique up to post-composition with a
conformal automorphism of $\mathbb D$.
\end{theorem}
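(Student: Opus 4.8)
The plan is to package existence and uniqueness into one statement about a \emph{critical-point map} and to prove that map is a homeomorphism by a continuity argument. Post-composition with $\mathrm{Aut}(\mathbb{D})$ acts freely on the set $\mathcal{B}_n$ of Blaschke products of degree $n$ (the stabiliser of $B$ fixes every point of $B(\mathbb{D})=\mathbb{D}$, hence is trivial) and properly, so the quotient $\mathcal{M}_n:=\mathcal{B}_n/\mathrm{Aut}(\mathbb{D})$ is a connected manifold of real dimension $(2n+1)-3=2n-2$. On the target side the admissible critical configurations form the symmetric power $\mathrm{Sym}^{n-1}(\mathbb{D})$, which we identify via the elementary symmetric functions with the monic polynomials of degree $n-1$ whose zeros all lie in $\mathbb{D}$: an open subset of $\mathbb{C}^{n-1}$, of real dimension $2n-2$, connected and simply connected (it deformation retracts to a point by scaling all zeros towards $0$). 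Since post-composition leaves critical points fixed, sending $[B]$ to the multiset of its $n-1$ critical points in $\mathbb{D}$ gives a continuous map $\Phi\colon\mathcal{M}_n\to\mathrm{Sym}^{n-1}(\mathbb{D})$, and Theorem~\ref{Th1} is precisely the assertion that $\Phi$ is bijective. I would deduce this from two facts: $\Phi$ is proper and $\Phi$ is a local homeomorphism --- because a proper local homeomorphism into a connected manifold is automatically surjective and is a finite-sheeted covering map, and a covering of a simply connected space whose total space is connected is a homeomorphism.

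\emph{Properness} comes from degeneration of the degree at the boundary: if a zero $a$ tends to $\zeta\in\mathbb{T}$, the factor $(z-a)/(1-\overline{a}z)$ tends, locally uniformly on $\mathbb{D}$, to the unimodular constant $-\zeta$. Thus along a sequence $[B_j]$ for which some zero of a representative escapes to $\mathbb{T}$, a subsequence of the $B_j$ converges locally uniformly to a Blaschke product of degree $m<n$, which has only $m-1<n-1$ critical points in $\mathbb{D}$; by Hurwitz' theorem at least one critical point of $B_j$ then leaves every compact subset of $\mathbb{D}$, so $\Phi([B_j])$ leaves every compact subset of $\mathrm{Sym}^{n-1}(\mathbb{D})$. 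Contrapositively, if $\Phi([B_j])$ stays in a compact set then the zeros of $B_j$ stay in a compact subset of $\mathbb{D}$ and $[B_j]$ has a convergent subsequence in $\mathcal{M}_n$; hence $\Phi$ is proper.

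\emph{The local homeomorphism property is the crux, and the step I expect to be the main obstacle.} One must show the real differential of $\Phi$ is everywhere invertible. Writing, for a Blaschke product with zeros $a_k$ and critical points $\xi_k$,
\[
\frac{B'(z)}{B(z)}=\sum_{k=1}^{n}\frac{1-|a_k|^2}{(z-a_k)(1-\overline{a}_k z)},\qquad
B'(z)=\gamma\,\frac{\prod_{k=1}^{n-1}(z-\xi_k)(1-\overline{\xi}_k z)}{\prod_{k=1}^{n}(1-\overline{a}_k z)^2},
\]
one differentiates these identities along an infinitesimal motion of the zeros and has to show that the induced motion of the $\xi_k$ vanishes only for the trivial motion; this reduces to definiteness of a quadratic form assembled from the partial fractions above. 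That positivity is the real content --- it is the non-degeneracy of the electrostatic equilibrium, equivalently the positive definiteness of the second variation of the energy functional of problem~(ii) in the abstract --- and I would seek a potential-theoretic (convexity) argument for it rather than a brute determinant computation. An alternative route to even the \emph{global} injectivity of $\Phi$ is Heins' conformal-metric argument: if $B_1,B_2$ have the same critical points, the pulled-back hyperbolic metrics $\sigma_i:=B_i^{*}\lambda$ of $\mathbb{D}$ both solve the Liouville equation $\Delta u=e^{2u}$ with the same conical singularities of angle $2\pi(m_k+1)$ at the $\xi_k$ and the same completeness at $\mathbb{T}$, so a comparison argument forces $\sigma_1=\sigma_2$; then $B_2\circ B_1^{-1}$ is single valued on the simply connected $\mathbb{D}$ (the branchings at the $\xi_k$ match) and is a holomorphic isometry of $(\mathbb{D},\lambda)$, hence lies in $\mathrm{Aut}(\mathbb{D})$.

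\emph{Conclusion.} Once $\Phi$ is known to be a proper local homeomorphism it is a covering map of $\mathrm{Sym}^{n-1}(\mathbb{D})$; as that base is connected and simply connected and $\mathcal{M}_n$ is connected, $\Phi$ is a homeomorphism, hence a bijection --- which is exactly existence together with uniqueness up to post-composition with a conformal automorphism of $\mathbb{D}$. (If one prefers to bypass the differential, the combination ``global injectivity of $\Phi$ from the metric argument $+$ invariance of domain $+$ properness'' already makes $\Phi$ an open, closed, injective map into a connected manifold, hence onto.) In every version, the essential point remains the rigidity assertion that two Blaschke products of equal degree with the same critical points differ only by post-composition with an automorphism of $\mathbb{D}$.
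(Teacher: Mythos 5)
Your overall strategy --- realize the critical-point assignment as a map $\Phi\colon\mathcal{M}_n\to\mathrm{Sym}^{n-1}(\mathbb{D})$ and show it is a proper local homeomorphism onto a connected, simply connected target --- is the classical topological route (Wang--Peng, Bousch, Zakeri), and it is entirely different from the paper's argument, which transforms the problem to the upper half-plane, reduces it to the electrostatic equilibrium equations \eqref{eq10}, \eqref{eq17}, and obtains existence as the global minimum of the energy $W$ from \eqref{eq23} and uniqueness by the Sarason--Suarez linear-algebra argument of Lemma~\ref{l6}. The skeleton of your proof (properness via Hurwitz, covering-space theory over a simply connected base) is sound, modulo one fixable point: properness must be argued for a chosen normalization of representatives (e.g.\ $B_j(0)=0$), since post-composition with automorphisms moves the zeros, so ``the zeros of $B_j$ stay in a compact subset'' is not meaningful for the class $[B_j]$ without such a choice.

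The genuine gap is the step you yourself flag as ``the crux'': nothing in the proposal actually establishes that $\Phi$ is a local homeomorphism (equivalently, that the real differential of the zeros-to-critical-points map is invertible), nor does your alternative metric route supply the missing rigidity. You reduce local injectivity to ``definiteness of a quadratic form assembled from the partial fractions'' and then say you ``would seek a potential-theoretic (convexity) argument''; that is a statement of intent, not a proof, and this definiteness is precisely the hard content of the theorem (in the paper's language it is the non-degeneracy of the equilibrium, which the paper circumvents entirely by proving \emph{global} uniqueness algebraically in Lemma~\ref{l6} rather than local non-degeneracy analytically). Likewise, in the Heins-style alternative, the assertion that ``a comparison argument forces $\sigma_1=\sigma_2$'' is exactly the generalized Ahlfors--Schwarz/maximum-principle lemma for conformal metrics with conical singularities that constitutes the substance of Heins' and Kraus--Roth's proofs; it is invoked, not proved, and the subsequent single-valuedness of $B_2\circ B_1^{-1}$ also needs a monodromy argument. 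Until one of these two steps is carried out, your argument establishes neither injectivity nor (via the covering argument) surjectivity. By contrast, the paper is self-contained at exactly this point: existence comes from the coercivity estimate showing $W$ attains its minimum, and uniqueness from the identity \eqref{eq21} together with the forced linear dependence of the squared Lagrange polynomials.
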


This theorem has been proved (in chronological order) by Heins~\cite{hei},
Wang and Peng~\cite{wan}, Bousch~\cite{bou}, and Zakeri~\cite{zak},
using topological arguments. Stephenson \cite[Theorem 21.1]{ste} obtains
$B$ as the limit of discrete finite Blaschke products, i.e., he considers
sequences of  circle packings with prescribed branch set. Kraus and
Roth~\cite{kr}, \cite{kr2} describe an approach based on a solution of
the Gaussian curvature equation (that works equally well for infinite Blaschke
products), and ask for a procedure to actually compute $B$ from its critical
points.

In this paper we show that the determination of Blaschke products
with prescribed critical points is equivalent to two other classical
problems of analysis. The first one is obtained after transforming
the problem from the unit disk to the upper half plane in form of a
second order ODE. We will have to look for its polynomial solutions
known as \emph{Stieltjes} and \emph{Van Vleck polynomials}. Like in the
case originally considered by Stieltjes this allows an electrostatic
interpretation and the characterization of solutions
as minimum points of an energy functional. This approach yields a new
and (as we hope) very transparent proof of Theorem~\ref{Th1}.
Moreover, the polynomial encoding the
given critical points is positive on the real axis and therefore an
inner point of the convex cone of positive polynomials on $\mathbb R$.
By mapping it to an inner point of the cone of power moments we
demonstrate that the problem is also equivalent to the classical
problem of finding canonical representations of given moments.

Algorithmic aspects of the different approaches and the results of
numerical experiments will be discussed in a forthcoming paper.

%-----------------------------------------------------------------------------

\section{Transformations} \label{sec1}

Let $B$ be Blaschke product of degree $n\ge 2$.  As
in~\cite[Lemma~3]{sewe} it is convenient to transform the problem
using the (inverse) Cayley transform
\[
T(z)=\mathrm{i}\,\frac{1+z}{1-z},
\]
which maps $\mathbb D$ and $\mathbb T$ onto the upper half-plane
$\mathbb H:=\{x+\mathrm{i}y:y>0\}$ and the extended real line
$\mathbb R\cup\{\infty\}$, respectively, and satisfies $T(1)=\infty$,
$T(-1)=0$.
Consequently, $f:=T\circ B\circ T^{-1}$ is a rational function that
is real-valued on $\mathbb R$ (except at its poles). The transition between the unit disc and the upper half plane was also a key tool in the  work of Gorkin and Rhoades \cite{gorh} on boundary interpolatioin by finite Blaschke products. 

Let us first assume that $B$ satisfies the normalization $B(1)=-1$.
Then $f$ has a zero at infinity and therefore $f=p/q$ with
real polynomials $p$ of degree $n-1$ and $q$ of degree $n$. Since,
for an appropriate branch of the argument function, the mapping
$\tau\mapsto\arg B(\mathrm{e}^{\mathrm{i}\tau})$, is continuous
and strictly monotone from
$[0,2\pi)$ onto some interval $[\varphi, \varphi+2n\pi)$, $f$  has
$n$ simple poles at real numbers  $x_1<x_2<\dots<x_n$ corresponding
to the $n$ values of $\tau$ with $B(\mathrm{e}^{\mathrm{i}\tau})=1$.
In each of the intervals $(x_k, x_{k+1})$ as well as in
$(-\infty, x_1)$ and $(x_n,\infty)$, the function $f$ is strictly
increasing. Therefore, the partial fraction decomposition of $f$ has
the form
\begin{equation}\label{eq1}
f(x)=-\frac{r_1}{x-x_1}-\frac{r_2}{x-x_2}-\dots -\frac{r_n}{x-x_n}
\end{equation}
with positive numbers $r_k$. Conversely, if $f$ is a rational function of
the form (\ref{eq1}) with ordered poles $x_1<x_2<\dots<x_n$ and
$r_k>0$, then $B:=T^{-1}\circ f\circ T $ is a rational function that
maps $\mathbb D$ and $\mathbb T$ onto themselves and satisfies $B(1)=-1$.
Hence $B$ is a Blaschke product. By the argument principle it has exactly
$n$ zeros  in $\mathbb D$, i.e., $B$ has degree $n$. Thus we have shown:

\begin{lemma} \label{l1} %OK
The mapping $B\mapsto f:=T\circ B\circ T^{-1}$ is a bijection
between all Blaschke products $B$ of degree $n$ satisfying  $B(1)=-1$
and all rational functions $f$ of the form {\rm (\ref{eq1})} with
ordered poles $x_1<x_2<\dots<x_n$ and numbers $r_k>0$ for
$k=1,2,\dots, n$.
\end{lemma}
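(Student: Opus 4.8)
The plan is to verify the three ingredients of a bijection separately: that the assignment $B\mapsto f:=T\circ B\circ T^{-1}$ actually lands in the claimed family of rational functions, that it is injective, and that it is onto.

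\emph{Well-definedness.} Let $B$ be a Blaschke product of degree $n$ with $B(1)=-1$. Because $T$ sends $\mathbb D$ to $\mathbb H$ and $\mathbb T$ to $\mathbb R\cup\{\infty\}$, while $B$ is a proper self-map of $\mathbb D$ carrying $\mathbb T$ to $\mathbb T$, the conjugate $f=T\circ B\circ T^{-1}$ is a rational self-map of $\mathbb H$ taking real values on $\mathbb R$; hence $f$ has real coefficients, and $\deg f=\deg B=n$ since conjugation by a Möbius transformation preserves the degree of a rational map. From $T(1)=\infty$, $B(1)=-1$ and $T(-1)=0$ we get $f(\infty)=0$, and this zero is simple because $B'$ does not vanish on $\mathbb T$ (the critical points of $B$ all lie off $\mathbb T$; equivalently $|B'(\mathrm e^{\mathrm i\tau})|=\sum_k(1-|a_k|^2)/|\mathrm e^{\mathrm i\tau}-a_k|^2>0$). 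Thus $f=p/q$ with $\deg p=n-1$, $\deg q=n$. Its poles are the $T$-images of the solutions on $\mathbb T$ of $B(w)=1$; since $B|_{\mathbb T}$ is a smooth $n$-fold covering of $\mathbb T$ with nonvanishing derivative and $B(1)=-1\ne1$, there are exactly $n$ of these, all simple and different from $1$, so $f$ has $n$ distinct finite real poles $x_1<\dots<x_n$ and partial fraction expansion $f(x)=-\sum_k r_k/(x-x_k)$ with nonzero real $r_k$. To pin down the signs I would use $\frac{\mathrm d}{\mathrm d\tau}\arg B(\mathrm e^{\mathrm i\tau})>0$ (the monotonicity recalled before the lemma), which translates into $f$ being strictly increasing on every component of $\mathbb R\setminus\{x_1,\dots,x_n\}$; comparison with $f'(x)=\sum_k r_k/(x-x_k)^2$ then forces $r_k>0$ for all $k$. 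Alternatively, $f$ is a rational Nevanlinna function vanishing at $\infty$, and its Herglotz representation gives the same conclusion.

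\emph{Injectivity and surjectivity.} Injectivity is immediate, since $T$ is a bijection of the sphere. For surjectivity, take $f$ of the form (\ref{eq1}) with $x_1<\dots<x_n$ and all $r_k>0$, and set $B:=T^{-1}\circ f\circ T$. As $f$ has real coefficients it maps $\mathbb R\cup\{\infty\}$ into itself, so $B(\mathbb T)\subseteq\mathbb T$; and the identity $\im f(z)=\im(z)\sum_k r_k/|z-x_k|^2>0$ for $\im z>0$ shows $f(\mathbb H)\subseteq\mathbb H$, hence $B(\mathbb D)\subseteq\mathbb D$. A rational self-map of $\mathbb D$ extending continuously to $\mathbb T$ with values in $\mathbb T$ is a proper holomorphic self-map of $\mathbb D$, hence a finite Blaschke product; its degree equals $\deg f$, and since the $n$ poles $x_k$ are distinct with nonzero residues, $f=p/q$ is in lowest terms with $\deg q=n>\deg p$, so $\deg B=n$. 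Finally $B(1)=T^{-1}(f(\infty))=T^{-1}(0)=-1$, so $B$ is a preimage of $f$ of the required type.

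I expect the only genuinely delicate points to be the sign determination for the residues in the forward direction --- converting the boundary monotonicity of $B$ into positivity of the $r_k$ --- and, in the reverse direction, the standard but not entirely trivial fact that a rational self-map of the disc which fixes the circle setwise must be a finite Blaschke product. Everything else is bookkeeping with the Cayley transform.
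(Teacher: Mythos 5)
Your proposal is correct and follows essentially the same route as the paper: conjugate by the Cayley transform, use the strict monotonicity of $\tau\mapsto\arg B(\mathrm{e}^{\mathrm{i}\tau})$ to locate the $n$ simple real poles and force $r_k>0$, and in the converse direction check that $T^{-1}\circ f\circ T$ is a self-map of $\mathbb D$ preserving $\mathbb T$, hence a Blaschke product of degree $n$. The only cosmetic difference is that you read off the degree from $f=p/q$ in lowest terms whereas the paper invokes the argument principle; both are fine.
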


It will turn out useful to also consider Blaschke products with the
side condition $B(1)=1$. Then $g:=T\circ B\circ T^{-1}$ has a pole
at infinity and can be written in the form $g=p/q$ with real
polynomials $p$ of degree $n$ and $q$ of degree $n-1$, respectively.
As in the case  considered above, one derives the existence of
$n-1$ finite poles $t_1<\dots <t_{n-1}$  corresponding to the $n-1$
points $\tau\in (0,2\pi)$, where $B(\mathrm{e}^{\mathrm{i}\tau})=1$.
In each of the intervals $(t_k, t_{k+1})$ as well as in $(-\infty, t_1)$
and $(t_{n-1},\infty)$ the function $g$ is strictly increasing,
hence the partial fraction decomposition of $g$ has the form
\begin{equation}\label{eq11}
g(x)=ax+b-\frac{s_1}{x-t_1}-\dots-\frac{s_{n-1}}{x-t_{n-1}}
\end{equation}
with $a>0$, $b\in\mathbb{R}$ and $s_k>0$. Proceeding as above, we get
the following lemma.

\begin{lemma}\label{l2} %OK
The mapping $B\mapsto g:=T\circ B\circ T^{-1}$ is a bijection
between all Blaschke products $B$ of degree $n$ satisfying  $B(1)=1$
and all rational functions $g$ of the form {\rm (\ref{eq11})} with
$a>0$, $b\in \mathbb{R}$, $t_1<t_2<\dots<t_{n-1}$ and $s_k>0$ for
$k=1,2,\dots, n-1$.
\end{lemma}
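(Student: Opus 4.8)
The plan is to follow, almost verbatim, the derivation carried out in the paragraphs preceding Lemma~\ref{l1}, keeping track of the one structural change produced by the normalization $B(1)=1$ in place of $B(1)=-1$: since $T(1)=\infty$, the conjugate $g=T\circ B\circ T^{-1}$ now has a \emph{pole} at $\infty$ instead of a zero there, and this is precisely what creates the affine summand $ax+b$ in~(\ref{eq11}).

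\textbf{From $B$ to $g$.} Let $B$ be a Blaschke product of degree $n$ with $B(1)=1$. Then $g=T\circ B\circ T^{-1}$ is a rational function, of degree $n$ as a self-map of the sphere (being a M\"obius conjugate of $B$) and real on $\mathbb R$. Since $B$ has no critical points on $\mathbb T$ (see the introduction), $B|_{\mathbb T}$ is an $n$-sheeted covering of $\mathbb T$ without branch points, and hence so is $g|_{\mathbb R\cup\{\infty\}}$ of $\mathbb R\cup\{\infty\}$; in particular the fiber $g^{-1}(\infty)$ meets $\mathbb R\cup\{\infty\}$ in exactly $n$ distinct points. As these account for all $n$ poles of $g$, every pole of $g$ is simple and lies in $\mathbb R\cup\{\infty\}$. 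One of them is $\infty$, because $g(\infty)=T(B(1))=T(1)=\infty$; the remaining $n-1$ are finite, say $t_1<\dots<t_{n-1}$. Consequently the partial fraction decomposition of $g$ reads
\[
g(x)=ax+b-\sum_{k=1}^{n-1}\frac{s_k}{x-t_k}
\]
with $a\neq0$ (simple pole at $\infty$), $b\in\mathbb R$, and $s_k\neq0$. To fix the signs I would use that $B$ maps $\mathbb D$ into $\mathbb D$, so $g$ maps $\mathbb H$ into $\mathbb H$, and that
\[
\im g(x+\mathrm{i}y)=y\Bigl(a+\sum_{k=1}^{n-1}\frac{s_k}{|x+\mathrm{i}y-t_k|^2}\Bigr)\ge 0\qquad(y>0).
\]
Fixing $x$ and letting $y\to\infty$ gives $a\ge 0$; putting $x=t_m$ and letting $y\to 0^+$ gives $s_m\ge 0$; together with $a\neq0$ and $s_m\neq0$ this yields $a>0$ and $s_m>0$. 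Thus $g$ has the form~(\ref{eq11}). (Alternatively, one can read off the same signs from the strict monotonicity of $g$ on each of the intervals $(-\infty,t_1),(t_1,t_2),\dots,(t_{n-1},\infty)$, exactly as for~(\ref{eq1}).)

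\textbf{From $g$ to $B$.} Conversely, let $g$ be of the form~(\ref{eq11}) with $a>0$, $b\in\mathbb R$, $t_1<\dots<t_{n-1}$, $s_k>0$, and set $B=T^{-1}\circ g\circ T$. The displayed identity for $\im g(x+\mathrm{i}y)$ now shows $g(\mathbb H)\subseteq\mathbb H$, while real coefficients together with the pole at $\infty$ give $g(\mathbb R\cup\{\infty\})\subseteq\mathbb R\cup\{\infty\}$; hence $B$ maps $\mathbb D$ into $\mathbb D$ and $\mathbb T$ into $\mathbb T$. Moreover $B$ has no pole in $\overline{\mathbb D}$, since $g\circ T$ maps $\overline{\mathbb D}$ into $\overline{\mathbb H}=\mathbb H\cup\mathbb R\cup\{\infty\}$, which does not contain the pole $-\mathrm{i}$ of $T^{-1}$. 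Therefore $B$ is a rational function that is holomorphic on $\overline{\mathbb D}$ and satisfies $|B|=1$ on $\mathbb T$, hence a finite Blaschke product, and $B(1)=T^{-1}(g(T(1)))=T^{-1}(g(\infty))=T^{-1}(\infty)=1$. Finally $\deg B=\deg g$, and $g$ has degree $n$: over the common denominator $\prod_{k=1}^{n-1}(x-t_k)$ the numerator of $g$ has leading term $a\,x^{n}$ (as $a\neq0$) and is coprime to the denominator, since at $x=t_m$ the numerator equals $-s_m\prod_{j\neq m}(t_m-t_j)\neq0$. (Equivalently, one counts the zeros of $B$ in $\mathbb D$ by the argument principle, i.e.\ the solutions in $\mathbb H$ of $g(\zeta)=\mathrm{i}=T(0)$: there are $n$ of them on the sphere, none real since $g$ is real on $\mathbb R$, and none in the lower half-plane since $g$ maps it into itself.) So $B$ is a Blaschke product of degree $n$ with $B(1)=1$. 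Since $B\mapsto g$ is inverted by $g\mapsto T^{-1}\circ g\circ T$, the correspondence is a bijection.

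I expect the only real point of care to be the bookkeeping at $\infty$ — recognizing that the condition $B(1)=1$ moves exactly one of the $n$ poles of $g$ to $\infty$, so that only $n-1$ finite poles $t_k$ survive and a linear part $ax+b$ appears — together with the determination of the signs $a>0$, $s_k>0$, for which the formula for $\im g(x+\mathrm{i}y)$ above is the efficient tool; everything else runs parallel to the derivation preceding Lemma~\ref{l1}.
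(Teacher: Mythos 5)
Your proof is correct, and while it follows the same overall skeleton as the paper (conjugate by the Cayley transform, locate the poles, take partial fractions, pin down the signs, then reverse), both key verifications are carried out by genuinely different means. For the poles, the paper uses the strict monotonicity of $\tau\mapsto\arg B(\mathrm{e}^{\mathrm{i}\tau})$, which maps $[0,2\pi)$ onto an interval of length $2n\pi$ and hence produces exactly $n$ simple solutions of $B(\mathrm{e}^{\mathrm{i}\tau})=1$, one of them at $\tau=0$; you instead invoke the absence of critical points of $B$ on $\mathbb T$ (quoted from the introduction) to see that $B|_{\mathbb T}$ is an unbranched $n$-fold covering, which yields the same $n$ distinct simple poles of $g$ on $\mathbb R\cup\{\infty\}$. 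For the signs $a>0$, $s_k>0$, the paper reads them off from the strict monotonicity of $g$ on each interval between consecutive poles (inherited from the monotonicity of $\arg B$ on $\mathbb T$), whereas you use the Herglotz-type identity for $\im g(x+\mathrm{i}y)$ together with $g(\mathbb H)\subseteq\mathbb H$; this is exactly the Nevanlinna--Pick representation of analytic self-maps of $\mathbb H$ and is arguably the cleaner and more robust sign argument, though it proves slightly less in passing (the paper's monotonicity of $g$ between poles is reused later, e.g.\ in the proof of Theorem~\ref{Th2}; your identity recovers it anyway by differentiating, cf.\ (\ref{eq30})). In the converse direction the paper counts the zeros of $B$ in $\mathbb D$ by the argument principle, while you compute $\deg g=n$ directly from the leading coefficient $a\neq 0$ and the coprimality of numerator and denominator; both are valid, and your explicit check that $B$ has no poles on $\overline{\mathbb D}$ before concluding that it is a finite Blaschke product is a detail the paper leaves implicit.
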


We first investigate the problem for functions of the form (\ref{eq1}).
It follows from the chain rule that $\xi_k\in \mathbb D$ is a
critical point of $B$ if and only if $\zeta_k:=T(\xi_k)\in\mathbb
H$ is a critical point of $f$. Since M\"obius maps preserve
symmetries in circles and lines, also the points
$\overline{\zeta}_k$ are critical points of $f$ corresponding to the
critical points $1/\overline{\xi}_k$ of $B$.  The derivative of $f$
has the form
\begin{equation}\label{eq2}
f'(x)= \frac{r_1}{(x-x_1)^2}+\frac{r_2}{(x-x_2)^2}+\dots
+\frac{r_n}{(x-x_n)^2}
=\frac{cP(x)}{Q(x)^2}
\end{equation}
with a monic real polynomial $P$ of degree $2n-2$, $c:=r_1+r_2+\dots
+r_n$ and
\begin{equation}\label{eq12}
Q(x):=\prod\limits_{k=1}^n(x-x_k).
\end{equation}
We conclude that $P$ has the factorization
\begin{equation}\label{eq13}
P(x)=   \prod_{k=1}^{n-1}(x-\zeta_k)(x-\overline{\zeta}_k)
=\prod_{k=1}^{n-1}\big((x-a_k)^2+b_k^2\big)
\end{equation}
where $a_k:=\mathrm{Re}\,\zeta_k$, $b_k:=\mathrm{Im}\,\zeta_k$.
The polynomial $P$ is entirely determined by the location of the critical points
and \eqref{eq2} shows that it satisfies the equation
\begin{equation}\label{eq3}
cP(x)=\sum_{k=1}^n r_k\prod_{\substack{j=1\\j\neq k}}^n (x-x_j)^2.
%= \sum_{k=1}^n r_k\,\frac{Q(x)^2}{(x-x_k)^2}.
\end{equation}
Evaluating {\rm\eqref{eq3}} at $x=x_k$ we get
\begin{equation}\label{eq7}
r_k=\frac{cP(x_k)}{\prod\limits_{\substack{j=1\\j\neq k}}^n (x_k-x_j)^2},
\qquad k=1,2,\dots, n.
\end{equation}
Inserting  (\ref{eq7}) into (\ref{eq3}), and introducing the
\emph{Lagrange interpolation polynomials}
\begin{equation}\label{lagrange}
Q_k(x):=\prod_{\substack{j=1\\j\neq k}}^n \frac{x-x_j}{x_k-x_j}
=\frac{Q(x)}{Q'(x_k)(x-x_k)}, \qquad k=1,2,\dots, n,
\end{equation}
which satisfy $Q_k(x_j)=\delta_{kj}$, we can rewrite equation (\ref{eq3}) as
\begin{equation}\label{eq5}
P(x)=\sum_{k=1}^nP(x_k)Q_k(x)^2.
\end{equation}
Since $P$ has degree $2n-2$, the \emph{Lagrange-Hermite interpolation formula}
(cf.~Chapter~14.1 of \cite{sze}) implies that for \emph{any}  pairwise
distinct points $x_1,x_2,\dots, x_n$,
\begin{equation} \label{eq.star}
P(x)=\sum_{k=1}^nP(x_k)h_k(x)+\sum_{k=1}^n P'(x_k)\mathfrak{h}_k(x),
\end{equation}
where for $k=1,2,\dots, n$,
\begin{equation} \label{eq.plus}
h_k(x):=\left(1-\frac{Q''(x_k)}{Q'(x_k)}(x-x_k)\right)Q_k(x)^2,\qquad
\mathfrak{h}_k(x):=(x-x_k)Q_k(x)^2
\end{equation}
are the \emph{fundamental polynomials} of the first and second kind
of \emph{Hermite interpolation}. From formulas {\rm\eqref{eq.star}},
{\rm\eqref{eq.plus}} we get
\begin{equation}\label{eq6}
P(x)=\sum_{k=1}^nP(x_k)Q_k(x)^2+\sum_{k=1}^n\left(P'(x_k)-P(x_k)
\frac{Q''(x_k)}{Q'(x_k)}\right)(x-x_k)Q_k(x)^2.
\end{equation}
The polynomials $(x-x_k)Q_k(x)^2$ are linearly independent, and
comparing (\ref{eq5}) and (\ref{eq6}) we see that the representation
(\ref{eq5}) holds if and only if
\[
Q'(x_k)P'(x_k)=Q''(x_k)P(x_k), \qquad k=1,2,\dots, n.
\]
Since the $x_1,x_2,\dots, x_n$ are exactly the (simple) roots of $Q$,
this condition means that the polynomial $Q'P'-Q''P$ is divisible by $Q$,
i.e., there is a real  polynomial $R$ such that
\[
Q'P'-Q''P=QR.
\]
Because $Q'P'-Q''P$ has exact degree $3n-4$ we see that $R$ has degree $2n-4$.
One easily checks that these considerations can be reversed to obtain the
following equivalence statement.

\begin{lemma}\label{l3}
Let $n\ge 2$, $x_1<x_2<\dots<x_n$, $r_1,r_2,\dots, r_n>0$,
$\zeta_1,\zeta_2,\dots, \zeta_{n-1}\in \mathbb H$, and let $f$, $Q$,
$P$ be defined by {\rm (\ref{eq1})}, {\rm (\ref{eq12})}, {\rm
(\ref{eq13})}, respectively. Then the rational function  $f$ has critical
points $\zeta_1,\zeta_2,\dots, \zeta_{n-1}$ if and only if
there is a real polynomial $R$ of (exact) degree $2n-4$ such that
\begin{equation}\label{eq8}
PQ''-P'Q'+RQ=0,
\end{equation}
and the $r_k$ are given by \eqref{eq7} for some $c>0$.
\end{lemma}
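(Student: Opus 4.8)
The plan is to unwind the chain of equivalences that the preceding text has essentially already assembled, and then check that each implication can be run backwards. The forward direction has been spelled out in the excerpt: starting from a rational function $f$ of the form \eqref{eq1} with critical points $\zeta_1,\dots,\zeta_{n-1}$, one computes $f'=cP/Q^2$ with $P$ given by \eqref{eq13}, derives \eqref{eq7} by evaluating \eqref{eq3} at the poles $x_k$, rewrites \eqref{eq3} as the Lagrange-square identity \eqref{eq5}, compares with the Lagrange-Hermite formula \eqref{eq.star}--\eqref{eq6}, and reads off that $Q$ divides $Q'P'-Q''P$; the quotient $R$ has degree $2n-4$ because $Q'P'-Q''P$ has degree exactly $3n-4$ (the leading terms do not cancel since $Q$ is monic of degree $n$ and $P$ is monic of degree $2n-2$, giving leading coefficient $n(2n-2)-n^2\cdot\frac{?}{}$ — in any case a routine degree count shows it is nonzero). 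So one direction is done modulo bookkeeping.

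For the converse I would argue as follows. Suppose $x_1<\dots<x_n$, $r_k>0$, $\zeta_k\in\mathbb H$ are given, $P$ and $Q$ are defined by \eqref{eq13} and \eqref{eq12}, $c>0$, the $r_k$ satisfy \eqref{eq7}, and \eqref{eq8} holds for some real $R$ of degree $2n-4$. From \eqref{eq8} evaluated at $x=x_k$ (where $Q(x_k)=0$) we recover $P(x_k)Q''(x_k)=P'(x_k)Q'(x_k)$ for every $k$; feeding this back through the comparison of \eqref{eq5} and \eqref{eq6} shows that $P(x)=\sum_k P(x_k)Q_k(x)^2$, i.e. \eqref{eq5} holds. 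Multiplying \eqref{eq5} by $c/Q(x)^2$ and using \eqref{lagrange} together with \eqref{eq7} turns the right-hand side into $\sum_k r_k/(x-x_k)^2$, which is exactly $f'(x)$ by the first equality in \eqref{eq2}. Hence $f'=cP/Q^2$, so the critical points of $f$ in $\mathbb H$ are precisely the zeros of $P$ in the upper half-plane, which by \eqref{eq13} are $\zeta_1,\dots,\zeta_{n-1}$. (One should note $f'$ has no other zeros: away from the poles $f'$ is a rational function whose numerator is $cP$, and the $\zeta_k$ exhaust the zeros of $P$ in $\mathbb H$; the conjugates $\overline{\zeta}_k$ account for the symmetric critical points, consistent with the discussion after \eqref{eq1}.)

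A few small points need care. First, one must make sure that the definition \eqref{eq7} of $r_k$ is consistent with positivity: since $P$ is a product of factors $(x-a_k)^2+b_k^2$ with $b_k=\im\zeta_k>0$, $P$ is strictly positive on $\mathbb R$, so $P(x_k)>0$ and the denominators in \eqref{eq7} are positive squares, whence $r_k>0$ automatically for any $c>0$ — this reconciles the hypothesis ``$r_k>0$'' with the conclusion ``$r_k$ given by \eqref{eq7} for some $c>0$'' and shows the two are not in tension. Second, the linear independence of the polynomials $(x-x_k)Q_k(x)^2$, used to pass from \eqref{eq6} to the pointwise conditions, should be invoked explicitly; it follows because these $n$ polynomials have degree $2n-1$ and $(x-x_j)Q_j(x)^2$ vanishes to order $2$ at each $x_i$ with $i\neq j$ but only to order $1$ at $x_j$, so no nontrivial linear combination can vanish identically (alternatively, they are part of the Hermite basis). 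Third, in the converse one should remark that the given $R$ in \eqref{eq8} is forced to equal the quotient $(Q'P'-Q''P)/Q$ once divisibility is known, so there is no ambiguity.

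The main obstacle, such as it is, is not any single hard estimate but rather the careful reversal of the interpolation argument: one has to confirm that the step ``comparing \eqref{eq5} and \eqref{eq6}'' is genuinely an equivalence and not merely an implication, which is exactly where the linear independence of the $(x-x_k)Q_k(x)^2$ enters and where the uniqueness of the Lagrange-Hermite representation \eqref{eq.star} is used. Everything else is degree counting and sign tracking. I would therefore structure the proof as: (1) record the degree of $Q'P'-Q''P$ and hence of $R$; (2) establish the equivalence ``\eqref{eq5} $\iff$ $Q\mid Q'P'-Q''P$'' cleanly, isolating the linear-independence lemma; (3) translate \eqref{eq5} into the identity $f'=cP/Q^2$ via \eqref{eq7} and \eqref{lagrange}; and (4) read off the statement about critical points, noting the automatic positivity of the $r_k$.
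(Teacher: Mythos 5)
Your proposal is correct and follows essentially the same route as the paper, which derives the chain \eqref{eq2}--\eqref{eq6} and then simply asserts that ``these considerations can be reversed''; your explicit reversal (evaluating \eqref{eq8} at the $x_k$, recovering \eqref{eq5} via the linear independence of the $(x-x_k)Q_k(x)^2$, and translating back to $f'=cP/Q^2$) is exactly what is intended. The one loose end you flag --- that the leading terms of $Q'P'$ and $Q''P$ do not cancel --- is the computation $n(2n-2)-n(n-1)=n(n-1)\neq 0$, which confirms that $Q'P'-Q''P$ has exact degree $3n-4$ and hence $\deg R=2n-4$.
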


The corresponding constructions for the function $g$ in (\ref{eq11})
are similar: Its derivative is
\begin{equation}\label{eq30}
g'(x)=a+\frac{s_1}{(x-t_1)^2}+\dots+\frac{s_{n-1}}{(x-t_{n-1})^2}
=\frac{a\,P(x)}{S(x)^2}.
\end{equation}
where $P$ is defined in {\rm\eqref{eq13}} as before, and
\begin{equation}\label{eq14}
S(x):=\prod_{k=1}^{n-1}(x-t_k).
\end{equation}
Now we  obtain the equation
\begin{equation} \label{eqAP}
a\,P(x)=a\prod_{k=1}^{n-1}(x-t_k)^2+\sum_{k=1}^{n-1}s_k
\prod_{\substack{j=1\\j\neq k}}^{n-1}(x-t_j)^2,
\end{equation}
in particular
\begin{equation}\label{eq4}
s_k=\frac{a\,P(t_k)}{\prod\limits_{\substack{j=1\\j\neq k}}^{n-1}(t_k-t_j)^2},
\qquad k=1,\dots, n-1.
\end{equation}
With the help of the Lagrange interpolation polynomials with nodes
$t_1, \dots, t_{n-1}$, given by
\begin{equation}
\label{lagrange2}
S_k(x):=\prod_{\substack{j=1\\j\neq k}}^{n-1} \frac{x-t_j}{t_k-t_j}
=\frac{S(x)}{S'(t_k)(x-t_k)}, \qquad k=1,2,\dots, n-1,
\end{equation}
equation {\rm\eqref{eqAP}} can be rewritten as
\begin{equation}\label{eq15}
P(x)=S(x)^2+\sum_{k=1}^{n-1}P(t_k)S_k(x)^2.
\end{equation}
Since $P$ and $S$ are monic, $P-S^2$ is at most of degree $2n-3$, and hence
the Lagrange-Hermite interpolation formula applied to this polynomial tells
us that
\[
P(x)-S(x)^2=\sum_{k=1}^{n-1}P(t_k)g_k(x)+\sum_{k=1}^{n-1} P'(t_k)\mathfrak{g}_k(x),
\]
where, for $k=1,2,\dots, n-1$,
\begin{equation*}
g_k(x):=\left(1-\frac{S''(t_k)}{S'(t_k)}(x-t_k)\right)S_k(x)^2,\qquad
\mathfrak{g}_k(x):=(x-t_k)\,S_k(x)^2.
\end{equation*}
A straightforward computation yields
\[
P(x)-S(x)^2=\sum_{k=1}^{n-1}P(t_k)S_k(x)^2
+\sum_{k=1}^{n-1}\left(P'(t_k)-P(t_k)
\frac{S''(t_k)}{S'(t_k)}\right)(x-t_k)\,S_k(x)^2,
\]
and comparing this with (\ref{eq15}) we get
\[
P'(t_k)S'(t_k)=P(t_k)S''(t_k), \qquad k=1,\dots, n-1,
\]
which is equivalent to the divisibility of $P'S'-PS''$ by $S$
(see {\rm\eqref{eq14}}).
The quotient $\tilde{R}$ is a polynomial of exact degree $2n-4$.
We summarize these results in the next lemma.

\begin{lemma} \label{l4} %OK
Let $n\ge 2$, $a>0$, $b\in\mathbb R$, $t_1<t_2<\dots<t_{n-1}$,
$s_1,s_2,\dots, s_{n-1}>0$, $\zeta_1,\zeta_2,\dots, \zeta_{n-1}\in
\mathbb H$, and let $g$,  $P$ and $S$ be given by {\rm (\ref{eq11})},
{\rm (\ref{eq13})}, and {\rm (\ref{eq14})}, respectively. Then the
rational function  $g$ has critical points $\zeta_1,\zeta_2,\dots,
\zeta_{n-1}$ if and only if there is a real polynomial $\tilde{R}$
of degree
$2n-4$ such that
\begin{equation}\label{eq16}
PS''-P'S'+\tilde{R}S=0,
\end{equation}
and the $s_k$ are given by \eqref{eq4}.
\end{lemma}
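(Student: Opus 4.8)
The plan is to mirror the derivation already carried out for $f$ in Lemma~\ref{l3}, adapting each step to the function $g$ of the form \eqref{eq11}. By Lemma~\ref{l2} the function $g$ is precisely the Cayley transform of a Blaschke product of degree $n$ with $B(1)=1$, and as in the case of $f$ the chain rule shows that $\zeta_k\in\mathbb H$ is a critical point of $g$ if and only if $\xi_k=T^{-1}(\zeta_k)$ is a critical point of $B$; reflection in $\mathbb R$ then supplies the conjugate critical points $\overline\zeta_k$. Hence $g'$ vanishes exactly at the $\zeta_k$ and $\overline\zeta_k$, which is the content of the factorization of the numerator in \eqref{eq30} as $a\,P(x)$ with $P$ given by \eqref{eq13}. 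This is the starting point from which everything else follows by the same algebra as before.

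The core of the argument is the interpolation computation, which I would carry out exactly as in the $f$-case but now with the $n-1$ nodes $t_1,\dots,t_{n-1}$ and the auxiliary monic polynomial $S$ of \eqref{eq14}. First, clearing denominators in \eqref{eq30} yields \eqref{eqAP}, evaluating at $x=t_k$ gives the formula \eqref{eq4} for $s_k$, and substituting \eqref{eq4} back into \eqref{eqAP} together with the Lagrange polynomials \eqref{lagrange2} rewrites the identity as \eqref{eq15}. The one genuinely new feature compared with Lemma~\ref{l3} is the extra leading term $S(x)^2$: because $P$ and $S$ are both monic and $\deg P=2n-2=2\deg S$, the difference $P-S^2$ has degree at most $2n-3$, which is exactly the degree range to which the Lagrange--Hermite formula (with $n-1$ nodes, hence of admissible degree up to $2n-3$) applies. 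Expanding $P-S^2$ in the Hermite fundamental polynomials $g_k,\mathfrak g_k$ and matching against \eqref{eq15} forces the coefficients of the second-kind polynomials $(x-t_k)S_k(x)^2$, which are linearly independent, to vanish; this gives the $n-1$ scalar conditions $P'(t_k)S'(t_k)=P(t_k)S''(t_k)$.

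These $n-1$ conditions say exactly that $P'S'-PS''$ vanishes at each simple root $t_k$ of $S$, hence that $S\mid P'S'-PS''$; writing the quotient as $\tilde R$ gives \eqref{eq16}, i.e.\ $PS''-P'S'+\tilde RS=0$. A degree count then pins down $\deg\tilde R$: the polynomial $PS''-P'S'$ has degree at most $(2n-2)+(n-3)=3n-5$ with, one checks, nonvanishing leading coefficient (the $x^{3n-5}$ terms of $PS''$ and $P'S'$ do not cancel since $\deg P\ne\deg S$), so dividing by the monic $S$ of degree $n-1$ leaves $\deg\tilde R=2n-4$. For the converse, every step above is reversible: given $\tilde R$ of degree $2n-4$ with \eqref{eq16}, the divisibility relation recovers the $n-1$ interpolation conditions, which in turn force \eqref{eq15} to hold, whence \eqref{eqAP} and the representation \eqref{eq30} of a derivative $g'$ whose numerator is $a\,P$; defining the $s_k$ by \eqref{eq4} (these are positive since $P$, being a product of terms $(x-a_k)^2+b_k^2$ with $b_k>0$, is strictly positive on $\mathbb R$) then produces a $g$ of the form \eqref{eq11} with the prescribed critical points. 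The only point demanding care is the bookkeeping of degrees and leading coefficients that yields $\deg\tilde R=2n-4$ and confirms that $P-S^2$ falls within the reach of the $(n-1)$-node Lagrange--Hermite formula; everything else is a transcription of the reasoning preceding Lemma~\ref{l3}.
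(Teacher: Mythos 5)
Your proposal is correct and follows essentially the same route as the paper: clearing denominators in \eqref{eq30} to get \eqref{eqAP}, evaluating at the nodes $t_k$ for \eqref{eq4}, rewriting as \eqref{eq15}, applying the Lagrange--Hermite formula to $P-S^2$ (degree at most $2n-3$), and comparing coefficients of the linearly independent polynomials $(x-t_k)S_k(x)^2$ to obtain the divisibility of $P'S'-PS''$ by $S$. Your explicit verification of the leading-coefficient non-cancellation giving $\deg\tilde R=2n-4$ is a detail the paper asserts without computation, but the argument is the same.
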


In this way the determination of a Blaschke product with given
critical points is reduced to the question for which real
polynomials $R$ (resp. $\tilde{R}$) the second order ODE (\ref{eq8})
(resp. (\ref{eq16})) has a polynomial solution $Q$ of degree $n$
(resp. $S$ of degree $n-1$) with simple real roots. This is a
classical problem which will be considered in the next section.

%-----------------------------------------------------------------------------

\section{Stieltjes and Van Vleck polynomials} \label{stieltjes}

Let $A$ and $B$ be given polynomials of degrees $p+1$ and $p$,
respectively. A polynomial $C$ of degree $p-1$ is called \emph{Van
Vleck polynomial}, if the \emph{generalized Lam\'e equation}
\begin{equation}\label{eq9}
AQ''+2BQ'+CQ=0
\end{equation}
has a polynomial solution $Q$ of preassigned degree $n$. The
solutions $Q$ are called \emph{Stieltjes polynomials} or
\emph{Heine-Stieltjes polynomials}. Under certain condition given
below, Stieltjes \cite{sti} proved the existence of the polynomials
that carry now his name (see also \cite[Section 6.8]{sze}).
Stieltjes assumed that
\[
A(x)=(x-a_0)(x-a_1)\cdots (x-a_p)
\]
has real roots $a_0<a_1<\dots <a_p$ and that the coefficients
$\varrho_k$  in the partial fraction decomposition
\[
\frac{B(x)}{A(x)}=\frac{\varrho_0}{x-a_0}+\frac{\varrho_1}{x-a_1}+\dots
+\frac{\varrho_p}{x-a_p}
\]
are all positive. Now consider  partitions $n=n_1+n_2+\dots+n_p$ of $n$
into $p$ non-negative integer summands $n_1,n_2,\dots,n_p$.
There are $\binom{n+p-1}{n}$ such partitions, and  each partition
corresponds to exactly one monic polynomial  $Q$ with $n$ distinct
real roots such that (\ref{eq9}) holds for a suitable Van Vleck
polynomial $C $. This polynomial $Q$ has exactly $n_k$
roots in $(a_{k-1},a_{k})$ for $k=1,2,\dots, p$ and can be
characterized as follows: Put positive charges $\varrho_k$  at the
fixed positions $a_k$ and movable unit charges on the real line such
that each interval $(a_{k-1}, a_k)$ contains exactly $n_k$ of them.
As usual in plane electrostatics, the force (repulsion or attraction)
between two charges is assumed to be proportional to their magnitudes
and to the inverse of their distance.
Stieltjes proved that there is a unique equilibrium
position of these $n$ positive movable unit charges; it is attained
when the charges are located at the zeros of $Q$ and corresponds to the
global minimum of the potential energy of each possible charge
configuration.

In our equations (\ref{eq8}) and (\ref{eq16}) the polynomial $P$
does not have real zeros so that Stieltjes' results are not directly
applicable. The necessary adaptations to the situation at hand will
be done in this section.

Assume first that $R$ is a Van Vleck polynomial for the equation
(\ref{eq8}) and that $Q$ has the form (\ref{eq12}). The identity
\begin{equation} \label{eqID}
Q''(x_k)=2\,Q'(x_k)\sum_{\substack{j=1\\j\neq k}}^n\frac{1}{x_k-x_j},
\qquad k=1,2,\dots, n,
\end{equation}
is well-known (see for instance formula (2.9) in \cite{ism}). Recall
that the logarithmic derivative of $P$ has the representation
\[
\frac{P'(x)}{P(x)}=\sum_{j=1}^{n-1}\left(\frac{1}{x-\zeta_j}
+\frac{1}{x-\overline{\zeta}_j}\right).
\]
Dividing \eqref{eq8} by $PQ'$ and evaluating at $x_k$ using {\rm\eqref{eqID}}
and $Q(x_k)=0$ we get
\begin{equation}\label{eq10}
\sum_{\substack{j=1\\j\neq k}}^n\frac{2}{x_k-x_j}
-\sum_{j=1}^{n-1}\left(\frac{1}{x_k-\zeta_j}
+\frac{1}{x_k-\overline{\zeta}_j}\right)=0,\qquad
k=1,2,\dots, n.
\end{equation}
Conversely, if the equations {\rm\eqref{eq10}} are satisfied,
the polynomial $P'Q'-PQ''$ has the zeros $x_1,\dots,x_n$, so that
$R:=(P'Q'-PQ'')/Q$ is a polynomial which satisfies {\rm\eqref{eq8}}.
Summarizing we get the following result:
% It can easily be verified that these equations are not only necessary
% but also sufficient conditions, so that we get the following result:

\begin{lemma} \label{l5}
Let $\zeta_1, \dots, \zeta_{n-1}\in\mathbb H$ be given and define
$P$ by {\rm (\ref{eq13})}.  Then  the polynomial $Q$ from {\rm
(\ref{eq12})} with $x_1<\dots<x_n$ is a Stieltjes polynomial for
the equation {\rm (\ref{eq8})} if and only  if {\rm (\ref{eq10})}
holds. Similarly, the polynomial $S$ from {\rm (\ref{eq14})} with
$t_1<\dots<t_{n-1}$  is a Stieltjes polynomial for the equation
{\rm (\ref{eq16})} if and only if
\begin{equation}\label{eq17}
\sum_{\substack{j=1\\j\neq k}}^{n-1}\frac{2}{t_k-t_j}
-\sum_{j=1}^{n-1}\left(\frac{1}{t_k-\zeta_j}
+\frac{1}{t_k-\overline{\zeta}_j}\right)=0,\qquad
k=1,2,\dots, n-1.
\end{equation}
\end{lemma}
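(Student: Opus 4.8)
The plan is to prove the equivalence for $Q$ and \eqref{eq8} in detail and then observe that the statement for $S$ and \eqref{eq16} follows from an entirely parallel computation. For the forward implication I would start from a Stieltjes polynomial $Q=\prod_{k=1}^n(x-x_k)$ with $x_1<\dots<x_n$, together with a Van Vleck polynomial $R$ satisfying \eqref{eq8}, and divide that identity by $PQ'$. The observation that makes the subsequent evaluation at $x=x_k$ legitimate is that $P$ has no real zeros: since $\zeta_j\in\mathbb H$ we have $b_j=\im\zeta_j>0$, so $P(x)=\prod_{j=1}^{n-1}\bigl((x-a_j)^2+b_j^2\bigr)$ is strictly positive on $\mathbb R$; moreover $Q'(x_k)\neq0$ because the $x_k$ are pairwise distinct. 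Evaluating at $x_k$ kills the term $RQ/(PQ')$ and leaves $Q''(x_k)/Q'(x_k)=P'(x_k)/P(x_k)$. Substituting the identity \eqref{eqID} on the left and the partial fraction expansion of $P'/P$ recalled above on the right turns this into \eqref{eq10}.

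For the converse I would simply run this backwards. Equation \eqref{eq10} is equivalent to $P(x_k)Q''(x_k)-P'(x_k)Q'(x_k)=0$ for $k=1,\dots,n$, so the polynomial $PQ''-P'Q'$ vanishes at all $n$ simple roots of $Q$ and is therefore divisible by $Q$; hence $R:=(P'Q'-PQ'')/Q$ is a polynomial and \eqref{eq8} holds. It then remains to confirm that $R$ is genuinely a Van Vleck polynomial, i.e.\ has exact degree $2n-4$: since $P$ is monic of degree $2n-2$ and $Q$ monic of degree $n$, the polynomials $PQ''$ and $P'Q'$ both have degree $3n-4$ with leading coefficients $n(n-1)$ and $2n(n-1)$, so the difference has exact degree $3n-4$ (using $n\ge 2$) and the quotient $R$ has exact degree $2n-4$. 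Together with the fact that $Q$ from \eqref{eq12} is monic of degree $n$, this shows $Q$ is a Stieltjes polynomial for \eqref{eq8}.

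For the second assertion I would repeat the argument verbatim with $S=\prod_{k=1}^{n-1}(x-t_k)$ in place of $Q$: divide \eqref{eq16} by $PS'$, evaluate at the roots $t_k$ of $S$ (again legitimate because $P>0$ on $\mathbb R$ and the $t_k$ are simple), and use \eqref{eqID} for the nodes $t_1,\dots,t_{n-1}$ together with the expansion of $P'/P$ to arrive at \eqref{eq17}; conversely \eqref{eq17} forces $PS''-P'S'$ to vanish at the $t_k$, hence to be divisible by $S$, and the analogous leading-coefficient count (coefficient of $x^{3n-5}$ equal to $(n-1)(n-2)-2(n-1)^2=-n(n-1)\neq0$) gives $\tilde R:=(P'S'-PS'')/S$ of exact degree $2n-4$. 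I do not expect a real obstacle here — the whole content is the reversibility of the evaluation step; the only points needing care are the remark that $P$ and $Q$ (resp.\ $S$) have no common zero so that the pointwise evaluations make sense, and the degree bookkeeping ensuring the constructed quotient has precisely the degree demanded in Lemmas~\ref{l3} and~\ref{l4}.
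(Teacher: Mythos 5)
Your proposal is correct and follows essentially the same route as the paper: divide \eqref{eq8} (resp.\ \eqref{eq16}) by $PQ'$ (resp.\ $PS'$), evaluate at the simple real roots using \eqref{eqID} and the logarithmic derivative of $P$, and reverse the argument by noting that \eqref{eq10} (resp.\ \eqref{eq17}) forces $PQ''-P'Q'$ (resp.\ $PS''-P'S'$) to vanish at all roots of $Q$ (resp.\ $S$), hence to be divisible by it. The extra details you supply --- positivity of $P$ on $\mathbb{R}$ and the leading-coefficient count giving exact degree $2n-4$ for the quotient --- are exactly the points the paper settles in Section~\ref{sec1} before stating the lemma.
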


\noindent
Equation~(\ref{eq10}) has the following electrostatic interpretation:
Fix negative charges of size $-1/2$ at each of the
$2n-2$ points $\zeta_k$ and $\overline{\zeta}_k$. Then $n$ (moveable)
positive unit charges at the positions $x_1,\dots, x_n$ on the real
axis are in equilibrium in the field generated by all charges.
Analogously, (\ref{eq17}) describes the equilibrium
positions of $n-1$ positive unit charges at $t_1,\dots, t_{n-1}$ on
the real axis in the presence of the same $2n-2$ negative charges at
the points $\zeta_k, \overline{\zeta}_k$.

The equilibrium of moveable positive unit charges in the presence
of fixed negative charges in $\mathbb{C}\setminus \mathbb R$ was studied
by Orive and Garc\'{i}a \cite{org}. Their  results are applicable to
(\ref{eq17}), and we will include a proof only to make our exposition
self-contained (see Lemma \ref{l6} below). However, the equilibrium
problem described by (\ref{eq10}) is not covered by their results,
since the number of movable unit charges surpasses the total of negative
charges by $1$.

Other equilibrium problems involving fixed charges have recently been
considered by Gr\"un\-baum \cite{gru}, \cite{gru1},
Dimitrov and Van Assche \cite{diva}, \cite{dim2}, and Grinshpan \cite{gri};
see Marcell\'{a}n, Mart\'inez-Finkelshtein, and Mart\'inez-Gonz\'{a}lez
\cite{mmm} for an overview until 2007. More recent literature includes
Mart\'inez-Finkelshtein and Rakhmanov \cite{mara},
McMillen, Bourget, and Agnew \cite{mba},
Orive and S\'{a}nchez-Lara \cite{osl1}, \cite{osl}, and Shapiro \cite{shap}.
\medskip

{\bfseries Example 1.} Let $n=2$, i.e., there is only one critical point
$\zeta:=\zeta_1$ given in $\mathbb H$. Then (\ref{eq17}) with
$t:=t_1$ simplifies to
\[
\frac{1}{t-\zeta}+\frac{1}{t-\overline{\zeta}}=0.
\]
The only solution of this equation is $t=\re \zeta$, the expected
equilibrium position of one unit charge. The equations (\ref{eq10})
read as follows
\begin{equation} \label{eq18}
\frac{2}{x_1-x_2}-\left(\frac{1}{x_1-\zeta}
+\frac{1}{x_1-\overline{\zeta}}\right)=0,\qquad
\frac{2}{x_2-x_1}-\left(\frac{1}{x_2-\zeta}
+\frac{1}{x_2-\overline{\zeta}}\right)=0.
\end{equation}
From the first equation in (\ref{eq18}) we get
\[
2(x_1-\zeta)(x_1-\overline{\zeta})=((x_1-\overline{\zeta})+(x_1-\zeta))(x_1-x_2)
\]
and after some elementary manipulations we arrive at
\begin{equation}\label{eq20}
|x_1-\zeta|^2+|x_2-\zeta|^2=(x_1-x_2)^2.
\end{equation}
Since this equation is invariant with respect to interchanging $x_1$
and $x_2$, the second equation of (\ref{eq18}) yields the same conditon.
Equation (\ref{eq20}) has a simple geometric meaning; it says that, by Thales'
theorem, the points $\zeta, \overline{\zeta}, x_1$ and $x_2$ lie on a circle.

\begin{center}
\begin{tikzpicture}[>=latex, scale=0.7]
\draw[->] (-3,0)--(5,0)node[below]{$\re$};
\draw[->] (-2.5,-3)--(-2.5,3)node[left]{$\im$};
\draw[dashed] (0,12/5)--(0,-12/5);
\node[below right] at (0,0){$t$};
\draw (0.7,0)circle (2.5cm);
\draw[fill=white] (0,12/5)circle (1mm)node[above]{$\zeta$};
\draw[fill=white] (0,-12/5)circle (1mm)node[below]{$\overline{\zeta}$};
\draw[fill=black] (-9/5,0)circle (1mm)node[below right]{$x_1$};
\draw[fill=black] (16/5,0)circle (1mm)node[below right]{$x_2$};
\draw[fill=white] (0,0)circle (1mm)node[below right]{$t$};
\end{tikzpicture}
\end{center}

The equilibrium positions $x_1, x_2$ are therefore not uniquely determined:
for each $x_1\in(-\infty, t)$ there is a corresponding $x_2\in (t,\infty)$,
and vice versa.
As we will see later (Theorem~\ref{Th2}), the situation is similar for $n>2$.
\medskip

As pointed out in \cite{kr}, the \emph{uniqueness statement} in
Theorem~\ref{Th1} follows from Nehari's generalization of  Schwarz' Lemma
(see \cite[corollary to Theorem~1]{neh}).
While all proofs of the \emph{existence part} of Theorem \ref{Th1} in the
literature are quite hard, the electrostatic interpretation allows us
to provide a simple and transparent proof of \emph{existence} and
\emph{uniqueness}.

The main argument for proving uniqueness is originally
due to Sarason and Suarez \cite{sasu} and has also been used  in \cite{gri}
and \cite{org}. The formulation of  \cite[Theorem 2]{org} can easily be
extended to cover our situation here. In our exposition the proof
is naturally based on the fundamental relation \eqref{eq15}.

We start with the problem of $n-1$ movable unit charges at positions
$t_1,\dots , t_{n-1}$ on the real line  and introduce their energy, which is
(neglecting some physically motivated factor)
\begin{equation}\label{eq23}
W(t_1, \dots, t_{n-1}): = \hspace*{-1ex} \sum_{1\le k,j\le n-1}
\hspace*{-1ex}\log \big|(t_k-\zeta_j)(t_k-\overline{\zeta}_j)\big|
-2\hspace*{-2ex}\sum_{1\le k<j\le n-1}\hspace*{-1ex} \log |t_j-t_k|.
\end{equation}
Since $W$ does not change upon permutations of its variables, we can
confine our considerations to the open subset $U$
of $\mathbb R^{n-1}$ where $t_1<\dots<t_{n-1}$.

\begin{lemma}\label{l6}
The function $W$ attains a global minimum at a point $(t_1,\dots,t_{n-1})$ in
$U$. This point is the only critical point of $W$ in $U$ and corresponds to
the unique solution of \eqref{eq17} with $t_1<t_2<\dots<t_{n-1}$.
\end{lemma}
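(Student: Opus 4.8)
The plan is to prove existence of a global minimizer, then show that minimizer must be an interior critical point of $W$, and finally identify interior critical points with solutions of \eqref{eq17} and prove there is only one of them.

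\textbf{Existence of a minimizer.} First I would observe that $W$ is continuous on the open set $U=\{t_1<\dots<t_{n-1}\}\subset\mathbb R^{n-1}$ and study its behaviour near $\partial U$ and at infinity. As two adjacent coordinates $t_k, t_{k+1}$ coalesce, the term $-2\log|t_{k+1}-t_k|$ tends to $+\infty$, while the remaining terms stay bounded below on bounded sets (the interaction terms $\log|(t_k-\zeta_j)(t_k-\overline\zeta_j)|$ are bounded below since $|t_k-\zeta_j|\ge b_j>0$), so $W\to+\infty$ there. As $\|(t_1,\dots,t_{n-1})\|\to\infty$, a counting of degrees shows that each $t_k$ contributes $2(n-1)$ logarithmic terms with a $+$ sign from the fixed charges and at most $2(n-2)$ with a $-$ sign from the other movable charges, so the "$+$" terms dominate and $W\to+\infty$. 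Hence the sublevel sets of $W$ are compact subsets of $U$, and $W$ attains a global minimum at some $(t_1,\dots,t_{n-1})\in U$.

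\textbf{Critical points and equation \eqref{eq17}.} Since the minimizer lies in the open set $U$, it is a critical point of $W$, i.e. $\partial W/\partial t_k=0$ for all $k$. Differentiating \eqref{eq23} gives exactly
\[
\frac{\partial W}{\partial t_k}=\sum_{j=1}^{n-1}\left(\frac{1}{t_k-\zeta_j}+\frac{1}{t_k-\overline\zeta_j}\right)-\sum_{\substack{j=1\\j\neq k}}^{n-1}\frac{2}{t_k-t_j}=0,
\]
which is precisely \eqref{eq17} (after multiplying by $-1$); note the interaction term is real because $\zeta_j$ and $\overline\zeta_j$ appear together. Thus every critical point of $W$ in $U$ solves \eqref{eq17} and conversely, and in particular the minimizer yields a solution with $t_1<\dots<t_{n-1}$.

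\textbf{Uniqueness.} This is the step I expect to be the main obstacle, and I would follow the Sarason--Suarez argument indicated in the text, built on the identity \eqref{eq15}. Suppose $S$ (with roots $t_1<\dots<t_{n-1}$) and $\tilde S$ (with roots $\tilde t_1<\dots<\tilde t_{n-1}$) both correspond to solutions of \eqref{eq17}; by Lemma~\ref{l5} both are Stieltjes polynomials for \eqref{eq16} with the same $P$, so \eqref{eq15} holds for each:
\[
P=S^2+\sum_{k=1}^{n-1}P(t_k)S_k^2,\qquad P=\tilde S^2+\sum_{k=1}^{n-1}P(\tilde t_k)\tilde S_k^2.
\]
Subtracting gives $\tilde S^2-S^2=\sum P(t_k)S_k^2-\sum P(\tilde t_k)\tilde S_k^2$. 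Evaluating this identity at a root $t_m$ of $S$ yields $\tilde S(t_m)^2=P(t_m)-\sum_k P(\tilde t_k)\tilde S_k(t_m)^2\le P(t_m)=S^2+\sum P(t_k)S_k^2$ evaluated at $t_m$, i.e. $\tilde S(t_m)^2\le P(t_m)$, with equality only if $\tilde S_k(t_m)=0$ for all $k$; symmetrically $S(\tilde t_m)^2\le P(\tilde t_m)$. The idea is then to use these inequalities, together with the interlacing / sign patterns of $S$, $\tilde S$ on the real line (each has all real simple roots and the same leading coefficient $1$), to force $S=\tilde S$: one assumes, say, $t_1\le\tilde t_1$ and tracks the implied ordering of all roots, deriving a contradiction unless the two root sets coincide. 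The delicate point is organising this sign-chasing cleanly; I would phrase it as: the polynomial $S-\tilde S$ has degree $\le n-2$, and the inequalities above show $S-\tilde S$ cannot change sign appropriately without vanishing identically. Once $S=\tilde S$, the roots agree, giving uniqueness of the solution of \eqref{eq17}, hence $W$ has exactly one critical point in $U$, which is therefore the global minimizer.
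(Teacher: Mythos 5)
Your existence argument and the identification of critical points of $W$ with solutions of \eqref{eq17} match the paper's proof: the same coercivity reasoning (boundary blow-up from $-2\log|t_{k+1}-t_k|$, and at infinity the count of $2(n-1)$ attracting logarithmic terms against $2(n-2)$ repelling ones, leaving a net $+2\log(1+|t_k|)$ per charge) is exactly what the paper makes precise with the bounds $|t-\zeta_j|\ge C(1+|t|)$ and $|t_j-t_k|\le(1+|t_j|)(1+|t_k|)$. That part is fine, if only sketched.

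The uniqueness step, however, has a genuine gap, and it is precisely the step you yourself flag as the main obstacle. You correctly reduce to the identity \eqref{eq15} holding for both $S$ and $\tilde S$, but the inequalities you extract from it, $\tilde S(t_m)^2\le P(t_m)$ and $S(\tilde t_m)^2\le P(\tilde t_m)$, are too weak: they compare each polynomial to $P$ at the other's roots, and no argument is given (nor is one apparent) that turns them into a sign/interlacing contradiction for $S-\tilde S$. The paper's proof uses a different and essential idea that is missing from your proposal: the $2n-2$ polynomials $S_1^2,\dots,S_{n-1}^2,\tilde S_1^2,\dots,\tilde S_{n-1}^2$ all have degree $2n-4$ and hence are linearly dependent, giving a nontrivial relation $\sum_k\alpha_kS_k^2=\sum_k\beta_k\tilde S_k^2$. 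Adding $t$ times this relation to the difference of the two instances of \eqref{eq15} and choosing $t$ so that one coefficient $P(t_k)+t\alpha_k$ (or $P(\tilde t_k)+t\beta_k$) vanishes while all others remain nonnegative, one evaluates at the corresponding node: one side vanishes there while the other is a sum of squares with nonnegative weights, forcing $\tilde S$ to vanish at that node too. This produces a common root of $S$ and $\tilde S$, and the argument is then iterated after dividing out the common factor. Without this linear-dependence-and-perturbation mechanism (or a worked-out substitute), your uniqueness claim is not established, and with it the assertion that the minimizer is the only critical point of $W$ in $U$.
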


\begin{proof}
1. A point $T:=(t_1, \dots, t_{n-1})$ is a critical point of $W$ if and only if
$({\partial W}/{\partial t_k})(T)=0$
for $k=1,\dots, n-1$. A straightforward computation shows that this is
equivalent to (\ref{eq17}).
\smallskip\par

2. In order to prove that $W$ attains a \emph{global minimum} in $U$,
we observe that the first sum in {\rm\eqref{eq23}} is
bounded from below, while the second sum tends to $+\infty$ whenever $T$
approaches a finite boundary point of $U$ (which implies that
$t_j-t_{j+1}\to 0$ for some $j$).
So it only remains to study the behavior of $W(T)$ when $\|T\|\to \infty$ in
some norm of $\mathbb R^{n-1}$. Let $C>0$ be a constant (depending
on $\zeta_1,\dots, \zeta_{n-1}$) such that
\[
|t-\zeta_j|\geq C(1+|t|),\qquad j=1,\dots, n-1, \quad t\in\mathbb R.
\]
Using  the inequality
\[
|t_j-t_k|\leq |t_j|+|t_k|\leq (1+|t_j|)(1+|t_k|)
\]
we estimate
\begin{align*}
W(t_1,&\dots, t_{n-1})\\
&\geq (n-1)\sum_{k=1}^{n-1}\log (C^2(1+|t_k|)^2)
-2\hspace*{-1ex}\sum_{1\le k<j\le n-1}\hspace*{-1ex}
\log \big((1+|t_j|)(1+|t_k|)\big)\\
&=C_1+2(n-1)\sum_{k=1}^{n-1}\log (1+|t_k|)-2(n-2)\sum_{k=1}^{n-1}\log(1+|t_k|)\\
&=C_1+2\sum_{k=1}^{n-1}\log(1+|t_k|)=C_1+2\log\prod_{k=1}^{n-1}(1+|t_k|)\\
&\geq C_1+2\log \left(1+\sum_{k=1}^{n-1}|t_k|\right)=C_1+2\log (1+\|T\|_1),
\end{align*}
and hence $W(T)\to\infty$ as $\|T\|_1\to \infty$. We conclude that $W$ attains
a global minimum at a point $T\in{\cal O}$ which represents a solution of
{\rm\eqref{eq17}}.
\smallskip\par

3. In order to show uniqueness (of the solution of \eqref{eq17} and hence
the  critical point of $W$ in $U$), we assume that $t_1< t_2<\dots<t_{n-1}$ and
$\tilde{t}_1< \tilde{t}_2<\dots<\tilde{t}_{n-1}$ are two solutions of
(\ref{eq17}). For $k=1,\dots, n-1$ let
\begin{alignat*}{2}
&S(x):=\prod_{j=1}^{n-1}(x-t_j), \quad
&&S_k(x):=\prod_{\substack{j=1\\j\neq k}}^{n-1}\frac{x-t_j}{t_k-t_j},\\
&\tilde{S}(x):=\prod_{k=j}^{n-1} (x-\tilde{t}_j),
&&\tilde{S}_k(x):=\prod_{\substack{j=1\\j\neq k}}^{n-1}
\frac{x-\tilde{t}_j}{\tilde{t}_k-\tilde{t}_j}.
\end{alignat*}
By  Lemma~\ref{l5}, both $S$ and $\tilde{S}$ are Stieltjes polynomials, i.e. they satify \eqref{eq16} for suitable Van Vleck polynomials. Going back from this equation in the  calculations in Section~\ref{sec1} we infer
they also satisfy equation (\ref{eq15}), and hence we have
\begin{equation}\label{eq21}
S(x)^2+\sum_{k=1}^{n-1}P(t_k)S_k(x)^2
=\tilde{S}(x)^2+\sum_{k=1}^{n-1}P(\tilde{t}_k)\tilde{S}_k(x)^2.
\end{equation}
The $2n-2$ polynomials $S_1^2,\dots, S_{n-1}^2, \tilde{S}_1^2, \dots ,
\tilde{S}_{n-1}^2$ are of degree $2n-4$, hence they must be linearly dependent,
i.e., there are real numbers $\alpha_k,\beta_k $, not all vanishing, such that
\[
\sum_{k=1}^{n-1}\alpha_k S_k(x)^2=\sum_{k=1}^{n-1}\beta_k\tilde{S}_k(x)^2.
\]
Multiplying this by $t\in \mathbb{R}$ and adding it to (\ref{eq21}) we get
\begin{equation}\label{eq22}
S(x)^2+\sum_{k=1}^{n-1}\big(P(t_k)+t\alpha_k\big)S_k(x)^2
=\tilde{S}(x)^2+\sum_{k=1}^{n-1}\big(P(\tilde{t}_k)+t\beta_k\big)\tilde{S}_k(x)^2
\end{equation}
for all $t\in\mathbb R$. Since $P(t_k)>0, P(\tilde{t}_k)>0$ we can
choose $t$ such that (at least) one of the numbers $P(t_k)+t\alpha_k$,
$P(\tilde{t}_k)+t\beta_k$ with $k\in \{1,\dots n-1\}$ vanishes while all
the others remain non-negative. Assume e.g. $P(t_{k_0})+t\alpha_{k_0}=0$ for some
$k_0\in\{1,\dots, n-1\}$. Since $S\big(t_{k_0}\big)=0$ and $S_k\big(t_{k_0}\big)=0$
for $k \not=k_0$, the left hand side of \eqref{eq22} vanishes at $x=t_{k_0}$.
But the right hand side of this equation cannot have other zeros than $\tilde{S}^2$,
hence $t_{k_0}=\tilde{t}_{j_0}$ for some $j_0\in\{1,\dots, n-1\}$.

If $n=2$ we are done. Otherwise the $2n-4$ polynomials of degree   $2n-6$
\[
\frac{S_k^2}{(x-t_{k_0})^2},\qquad \frac{\tilde S_j^2}{(x-\tilde{t}_{j_0})^2}\qquad
(1\leq k,\ j\leq n-1,\ k\neq k_0,\ j\neq j_0)
\]
are also linearly dependent, so that we have a non-trivial relation of the form
\[
\sum_{\substack{k=1\\k\neq k_0}}^{n-1}\gamma_k S_k(x)^2
=\sum_{\substack{j=1\\j\neq j_0}}^{n-1}\delta_j\tilde{S}_j(x)^2.
\]
With (\ref{eq21}) we have for $t\in\mathbb R$
\begin{alignat*}{2}
S(x)^2 &+P(t_{k_0})S_{k_0}(x)^2 + \sum_{\substack{k=1\\k\neq k_0}}^{n-1}
(P(t_k)+t\gamma_k) S_k(x)^2 \\
&=\tilde{S}(x)^2+P(\tilde{t}_{j_0})\tilde{S}_{j_0}(x)^2
+\sum_{\substack{j=1\\j\neq j_0}}^{n-1}
(P(\tilde{t}_j)+t\delta_j)\tilde{S}_j(x)^2.
\end{alignat*}
% \[
% S(x)^2+P(t_{k_0})S_{k_0}(x)^2+\sum_{\substack{k=1\\k\neq k_0}}^{n-1}
% (P(t_k)+t\gamma_k) S_k(x)^2
% =\tilde{S}(x)^2+P(\tilde{t}_{j_0})\tilde{S}_{j_0}(x)^2
% +\sum_{\substack{j=1\\j\neq j_0}}^{n-1}
% (P(\tilde{t}_j)+t\delta_j)\tilde{S}_j(x)^2.
% \]
As above we find now  another pair of equal zeros of $S$ and $\tilde{S}$.
Proceeding inductively we finally get that both solutions of (\ref{eq17})
are identical. \qed
\end{proof}

The solution of \eqref{eq10} can now be reduced to \eqref{eq17}
using an appropriate M\"{o}bius transformation. We summarize the results
for both equations in the following theorem.

\begin{theorem}
\label{Th2}
For arbitrary $\zeta_1,\dots, \zeta_{n-1}\in \mathbb H$ we have:
\begin{enumerate}
\item[{\rm (i)}]
The equation {\rm (\ref{eq17})} has a unique solution with
$t_1<t_2<\dots <t_{n-1}$. It realizes the global minimum of the
potential energy $W$ defined in {\rm (\ref{eq23})}, and is (up to
a rearrangement of the $t_k$) the only critical point of $W$.
\item[{\rm (ii)}]
Let additionally  $t_0:=-\infty$, $t_n:=+\infty$ and fix $k_0\in
\{1,2,\dots, n\}$. For every $x_{k_0}\in (t_{k_0-1}, t_{k_0})$
there are  unique points $x_k\in (t_{k-1}, t_k)$ for
$k=1,\dots, k_0-1, k_0+1, \dots,n$ such that  equation {\rm
(\ref{eq10})} is satisfied. All $x_k$ depend continuously and
monotonously on $x_{k_0}$, and we have $x_k \to t_{k-1}$ if
$x_{k_0}\to t_{k_0-1}$ and  $x_k \to t_k$ if $x_{k_0}\to
t_{k_0}$.
\end{enumerate}
\end{theorem}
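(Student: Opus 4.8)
The plan is to establish part (i) directly from Lemma~\ref{l6}, and then to reduce part (ii) to part (i) by a carefully chosen Möbius transformation, exploiting the electrostatic interpretation of equations \eqref{eq10} and \eqref{eq17}. For (i), essentially nothing new is needed: Lemma~\ref{l6} already asserts that $W$ attains a global minimum in $U$, that this is the unique critical point of $W$ in $U$, and that it corresponds to the unique ordered solution of \eqref{eq17}. So part (i) is just a restatement and requires at most a sentence of bookkeeping.

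For part (ii), the idea is to transform the equilibrium problem \eqref{eq10} for $n$ movable charges into the equilibrium problem \eqref{eq17} for $n-1$ movable charges. Fix $k_0$ and the prescribed point $x_{k_0}$. I would apply a real Möbius transformation $\mu$ of the Riemann sphere that sends $x_{k_0}$ to $\infty$; the remaining $n-1$ movable charges $x_k$ ($k\neq k_0$) are mapped to finite real points, and the $2n-2$ fixed negative charges $\zeta_j,\overline{\zeta}_j$ are mapped to $2n-2$ new points $\eta_j,\overline{\eta}_j$, still symmetric about $\mathbb{R}$ and off the real axis (since $\mu$ is real, hence preserves $\mathbb{H}$ or swaps it with the lower half-plane; compose with $z\mapsto \bar z$ if necessary). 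Because Möbius maps preserve the structure of the electrostatic-type relations \eqref{eq10}/\eqref{eq17} — this is exactly the same mechanism already used in Section~\ref{sec1} to pass between $f$ and $g$, and in Lemma~\ref{l5} — the condition that the $n$ points $x_1,\dots,x_n$ satisfy \eqref{eq10} for the configuration $\{\zeta_j\}$ becomes, after sending one of them to infinity, precisely the condition that the $n-1$ images satisfy \eqref{eq17} for the configuration $\{\eta_j\}$. By part (i), that transformed problem has a unique ordered solution, which pulls back to a unique choice of $x_k\in(t_{k-1},t_k)$ for $k\neq k_0$; here one must check that the image ordering of the intervals is respected, which follows because $\mu$ is monotone on each complementary arc.

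The continuity and monotonicity statements, and the boundary behavior $x_k\to t_{k-1}$ as $x_{k_0}\to t_{k_0-1}$ (resp.\ $x_k\to t_k$ as $x_{k_0}\to t_{k_0}$), I would obtain from the implicit function theorem together with a compactness/exhaustion argument. The system \eqref{eq10} with $x_{k_0}$ treated as a parameter has nonvanishing Jacobian in the remaining $x_k$ at any solution — this is the same positive-definiteness computation underlying the uniqueness of the critical point of $W$ (the Hessian of a strictly convex energy), so the solution depends real-analytically on $x_{k_0}$. Monotonicity follows by differentiating \eqref{eq10} in $x_{k_0}$ and invoking the sign pattern of that Jacobian, or more transparently by the physical picture: as the charge at $x_{k_0}$ moves right, it pushes every other charge right. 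For the endpoint behavior, as $x_{k_0}\to t_{k_0}$ one shows the limiting configuration of the other $n-1$ charges must solve \eqref{eq17} (the charge escaping to the node $t_{k_0}$ decouples in the limit), and by uniqueness in (i) they converge to the $t_k$; the case $x_{k_0}\to t_{k_0-1}$ is symmetric.

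The main obstacle I expect is the last point: making rigorous that the solution of \eqref{eq10} stays within the prescribed intervals $(t_{k-1},t_k)$ for all $x_{k_0}$ in the open interval and genuinely runs over the whole interval as $x_{k_0}$ does — i.e., ruling out that some $x_k$ collides with a node $t_j$ or escapes to infinity prematurely. A clean way around this is to avoid analyzing \eqref{eq10} head-on and instead argue entirely on the transformed side: after the Möbius reduction, part (i) supplies existence, uniqueness, and (via the energy functional $W$) the monotone dependence on the fixed-charge configuration, which here is governed by the single real parameter $x_{k_0}$; translating the interval constraints through $\mu$ then gives the interlacing and the boundary limits. The bookkeeping of which interval maps to which under $\mu$ for the two cases $k_0=1$, $k_0=n$ (where $t_0=-\infty$ or $t_n=+\infty$) deserves a careful but routine case check.
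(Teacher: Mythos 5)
Part (i) is indeed just Lemma~\ref{l6} restated, so no issue there. For part (ii), however, your reduction goes in a direction that creates problems the paper's argument is specifically designed to avoid. You precompose with a \emph{domain} M\"obius map $\mu$ sending $x_{k_0}$ to $\infty$, which turns \eqref{eq10} into an instance of \eqref{eq17} for a \emph{new} fixed-charge configuration $\{\mu(\zeta_j)\}$. That does yield existence and uniqueness of the remaining charges (modulo checking, via Lemmas~\ref{l3}--\ref{l5}, that the equilibrium equations really are covariant under $\mu$ --- they are not covariant as written, only through the critical-point correspondence --- and modulo ruling out that one of the transformed equilibrium points lands at $\mu(\infty)$, which would pull back to a ``charge at infinity'' rather than a finite $x_k$). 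But the transformed equilibrium points are the solution of \eqref{eq17} for $\{\mu(\zeta_j)\}$ and have no a priori relation to the points $t_1,\dots,t_{n-1}$, which solve \eqref{eq17} for the \emph{original} configuration $\{\zeta_j\}$. The interlacing $x_k\in(t_{k-1},t_k)$, the monotone dependence on $x_{k_0}$, and the boundary limits $x_k\to t_{k-1}$, $x_k\to t_k$ are all statements relating the two problems, and ``translating the interval constraints through $\mu$'' cannot produce them, because the $t_k$ are not defined through $\mu$ at all. This is exactly the obstacle you flag at the end, and your proposed workarounds do not close it. In particular, the appeal to strict convexity of $W$ is false: the fixed-charge contribution $\log\big((t-a_j)^2+b_j^2\big)$ has negative second derivative for $|t-a_j|>b_j$, so $W$ is not convex, and the paper's uniqueness in Lemma~\ref{l6} is proved by the algebraic Sarason--Suarez argument, not by positive definiteness of a Hessian; the implicit-function-theorem and ``pushes every other charge right'' steps therefore rest on an unproved (and unjustified as stated) Jacobian claim.

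The paper's M\"obius transformation acts on the \emph{target}, not the domain, and this is the missing idea. From the unique solution $t_1<\dots<t_{n-1}$ of \eqref{eq17} one builds the single rational function $g$ of the form \eqref{eq11} (Lemmas~\ref{l4} and \ref{l5}), which is strictly increasing from $-\infty$ to $+\infty$ on each interval $(t_{k-1},t_k)$. The solutions of \eqref{eq10} with prescribed $x_{k_0}$ are then precisely the level set $\{x:\, g(x)=g(x_{k_0})\}$, because $f:=-1/(g-g(x_{k_0}))$ has the form \eqref{eq1} with poles exactly there and the same critical points, so Lemmas~\ref{l3} and \ref{l5} apply. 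Existence, the interlacing with the $t_k$, continuity, monotonicity, and the boundary limits all drop out at once from the monotonicity of this one function $g$; uniqueness follows by running the construction backwards ($\tilde g:=-1/\tilde f$ for any competing solution) and invoking part (i). I recommend you restructure part (ii) around this level-set description rather than the domain transformation.
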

\begin{proof}
Assertion~(i) has been proved in Lemma~\ref{l6}.
In order to show the \emph{existence part} of (ii) let $a>0$, $b\in\mathbb R$
be arbitrary and define $s_1,\dots, s_{n-1} $  by (\ref{eq4}).
According to Lemma~\ref{l4} and Lemma~\ref{l5}, the function $g$
from (\ref{eq11}) has critical points $\zeta_1,\dots, \zeta_{n-1}$.
Since $g(x)\to\pm\infty $ as $x\to t_k\mp 0$ ($k=0,\dots,n)$ and $g$  is strictly
increasing in each interval $(t_{k-1},t_k)$ ($k=1,\dots,n)$,
any such interval contains a unique solution $x_k$ of $g(x_k)=g(x_{k_0})$.
It is clear that these points have the claimed properties concerning
continuity, monotonicity and limit behavior. The function $f$ defined by
\begin{equation}\label{eq31}
f(x):=-\frac{1}{g(x)-g(x_{k_0})}
\end{equation}
is rational and can be represented as a quotient $f=p/q$, where $p$ is a real
polynomial of degree $n-1$, and $q$ is a real polynomial of degree $n$,
respectively. Since $f$ has simple poles in $x_1,\dots, x_n$,
the partial fraction decomposition of $f$ has the form
(\ref{eq1}). From the monotonicity of $g$ we obtain that $f$ is
increasing between two poles and thus $r_k>0$. Because $f$ and $g$ have
the same critical points $\zeta_1,\dots,\zeta_{n-1}$, we can combine
Lemma~\ref{l3} and Lemma~\ref{l5} to obtain (\ref{eq10}).

In order to show the \emph{uniqueness assertion} of (ii) we let
$\tilde{x}_1<\dots < \tilde{x}_n$ be any solution of (\ref{eq10})
with $\tilde{x}_{k_0}=x_{k_0}$ and define $\tilde{r}_k>0$ by
(\ref{eq7}) for some $c>0$ and $x_k$ replaced by $\tilde{x}_k$. Let 
 the function $\tilde{f}$ be of the form (\ref{eq1}) with poles at $\tilde{x}_k$ and residues
$\tilde{r}_k$. According to Lemma~\ref{l3} and Lemma~\ref{l5}, $\tilde{f}$ has
critical points $\zeta_1,\dots, \zeta_{n-1}$. Since $\tilde{f}$
is real-valued and increasing between any two of its poles, it has zeros
$\tilde{t}_k\in (\tilde{x}_k, \tilde{x}_{k+1})$ for $k=1,\dots, n-1$.
Hence $\tilde{g}:=-1/\tilde{f}$ has the form
\[
\tilde{g}(x)=\tilde{a}x+\tilde{b}-\sum_{k=1}^{n-1}
\frac{\tilde{s}_k}{x-\tilde{t}_k}, \qquad \tilde{a}
:=\bigg(\sum_{k=1}^{n-1}\tilde{r}_k\bigg)^{-1}>0,\quad
\tilde{s}_k>0,  \quad \tilde{b}\in\mathbb R.
\]
The critical points of $\tilde{g}$ are the same as those of
$\tilde{f}$, hence we can again invoke Lemma~\ref{l4} and Lemma~\ref{l5}
to conclude that $\tilde{t}_1,\dots, \tilde{t}_{n-1}$ solve (\ref{eq17}).
From the uniqueness stated in (i) it follows that $\tilde{t}_k=t_k$ for
$k=1,\dots, n-1$. Consequently, by {\rm\eqref{eq30}} and {\rm\eqref{eq14}},
\[
g'(x)= \frac{a P(x)}{S(x)}, \qquad \tilde{g}'(x)= \frac{\tilde{a}P(x)}{S(x)}.
\]
Hence $g'=d\tilde{g}'$ for some $d>0$, and therefore $g=d\tilde{g}+e$ with
$e\in\mathbb R$. Using (\ref{eq31}) we obtain
\[
-\frac{1}{f(x)}+g(x_{k_0})=g(x)=d\tilde{g}(x)+e=-\frac{d}{\tilde{f}(x)}+e
\]
and since $f$ and $\tilde{f}$ both have poles at $x_{k_0}$ it
follows that $e=g(x_{k_0})$. Thus $\tilde{f}=df$, in particular
$\tilde{x}_k=x_k$ for $k=1,\dots, n$. \qed
\end{proof}

The proof of Theorem~\ref{Th2} also indicates how  to construct
rational functions $f$ and $g$ of the form \eqref{eq1} or \eqref{eq11} with given
critical points $\zeta_k$  from the solutions of \eqref{eq10} and \eqref{eq17},
respectively. Setting $B:=T^{-1} \circ f\circ T$ we obtain by Lemma~\ref{l1} a Blaschke product with critical points $\xi_k=T^{-1}(\zeta_k)$, and the same is true for    $B:=T^{-1} \circ g\circ T$ by Lemma~\ref{l2}. This shows again the existence of a Blaschke product of degree $n$ with $n-1$ given critical points in $\mathbb D$.  In the same way the uniqueness statement of Theorem~\ref{Th1} can be inferred from the uniqueness statements in Theorem~\ref{Th1} and we have thus provided an independent proof  of this result.
\medskip

{\bfseries Example 2.} Consider the case where $ \zeta_1=\dots= \zeta_{n-1}=\mathrm{i}$.
Since $T^{-1}(\mathrm{i})=0$, we have $\xi_1=\dots=\xi_{n-1}=0\in\mathbb  D$,
i.e., this problem is equivalent to  finding a Blaschke product $B$
of degree $n$ with  critical point  of order $n-1$ at $z=0$. An
obvious solution with $B(1)=1$ is $B(z)=z^n$, and
the solutions of $B(z)=1$ are the $n$th roots of unity
$\eta_k=\mathrm{e}^{2\pi \mathrm{i}\,k/n}$ ($k=0,1,\dots, n-1$).
The unique solution of (\ref{eq17}) is then given by the images of
$\eta_1,\dots,\eta_{n-1}$ under the mapping $T$,
\[
t_k=T(\eta_k)=\mathrm{i}\,
\frac{1+\mathrm{e}^{2\pi \mathrm{i}\,k/n}}{1-\mathrm{e}^{2\pi \mathrm{i}\,k/n}}
\]
The case where all $ \zeta_1,\dots, \zeta_{n-1}$ coincide at another
point of $\mathbb H$ can be reduced to this cases by an appropriate automorphism
$z\mapsto cz+d$ with $c>0, d\in\mathbb R$.
\medskip

By now we know that the Stieltjes polynomials (\ref{eq12})
(corresponding to the one-parameter family of solutions of
(\ref{eq10})) and (\ref{eq14}) (corresponding to the solution of
(\ref{eq17})) are associated with  certain Van Vleck polynomials
satisfying (\ref{eq8}) and (\ref{eq16}), respectively.
It turns out that all these polynomials are the same, so
that we can speak  about the \emph{Van Vleck polynomial for the
critical points $\zeta_k$}.

\begin{lemma}\label{l9}
Let $\zeta_1,\dots,\zeta_{n-1}\in\mathbb H$ and let
$t_1,\dots,t_{n-1}$ and $x_1,\dots,x_n$ be the solutions described
in Theorem \ref{Th2}. Then the Van Vleck polynomials $R$ and
$\tilde{R}$, corresponding to  solutions {\rm (\ref{eq12})} and
{\rm (\ref{eq14})} of {\rm (\ref{eq8})} and {\rm (\ref{eq16})},
respectively, coincide.
\end{lemma}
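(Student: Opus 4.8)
The plan is to read each Van Vleck polynomial off from the Schwarzian derivative of the rational function it belongs to, and then to invoke the invariance of the Schwarzian under fractional linear maps. Write $\{h,x\}:=h'''/h'-\tfrac32(h''/h')^{2}$ for the Schwarzian derivative. The starting point is that the two rational functions in play are Möbius equivalent: in the construction carried out in the proof of Theorem~\ref{Th2}(ii), the function $f$ of the form \eqref{eq1} with poles $x_{1},\dots,x_{n}$ and the function $g$ of the form \eqref{eq11} with poles $t_{1},\dots,t_{n-1}$ are linked by \eqref{eq31}, that is, $f=\mu\circ g$ with the fractional linear map $\mu(w)=-1/(w-g(x_{k_0}))$. (One may equally appeal to Theorem~\ref{Th1}: the Blaschke products $T^{-1}\circ f\circ T$ and $T^{-1}\circ g\circ T$ have the same critical points, hence differ by post-composition with an automorphism of $\mathbb D$, which after conjugation by $T$ relates $f$ and $g$ by an automorphism of $\mathbb H$.) Since fractional linear maps have vanishing Schwarzian, post-composition with $\mu$ does not change it, so $\{f,x\}=\{g,x\}$.

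The second step is the computation that converts this into the claim. From \eqref{eq2} we have $f'=cP/Q^{2}$, hence $f''/f'=P'/P-2Q'/Q$; differentiating and collecting terms yields
\[
\{f,x\}=\frac{P''}{P}-\frac32\left(\frac{P'}{P}\right)^{2}-\frac{2Q''}{Q}+\frac{2P'Q'}{PQ}.
\]
The last two terms equal $2(P'Q'-PQ'')/(PQ)$, and this is where the Stieltjes equation enters: by \eqref{eq8} one has $P'Q'-PQ''=RQ$, so these terms collapse to $2R/P$ and
\[
\{f,x\}=\frac{P''}{P}-\frac32\left(\frac{P'}{P}\right)^{2}+\frac{2R}{P}.
\]
Carrying out the identical calculation for $g$, now with $g'=aP/S^{2}$ from \eqref{eq30} and $P'S'-PS''=\tilde{R}S$ from \eqref{eq16} (and noting that the polynomial $P$ of \eqref{eq13} is literally the same object in both situations), gives
\[
\{g,x\}=\frac{P''}{P}-\frac32\left(\frac{P'}{P}\right)^{2}+\frac{2\tilde{R}}{P}.
\]
Comparing the two displayed identities and using $\{f,x\}=\{g,x\}$ yields $2R/P=2\tilde{R}/P$ as rational functions, and since $P\not\equiv0$ we conclude $R=\tilde{R}$.

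Most of the argument is routine bookkeeping, and the two points that do require care are both already supplied by earlier parts of the paper: that $f$ and $g$ are Möbius equivalent (the relation \eqref{eq31} in the proof of Theorem~\ref{Th2}(ii), or, less self-containedly, Theorem~\ref{Th1}), and that the polynomial appearing as the numerator factor in both $f'$ and $g'$ is one and the same $P$ from \eqref{eq13}. With these facts in hand, the only genuine content is the elementary Schwarzian identity above, so I do not expect a serious obstacle; if anything, the mild subtlety is keeping track of the signs when simplifying the $Q$-terms via \eqref{eq8} (and the $S$-terms via \eqref{eq16}), which is precisely the step that uses the Stieltjes property rather than just the form of $f'$.
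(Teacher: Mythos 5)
Your proof is correct, but it takes a genuinely different computational route from the paper's. Both arguments rest on the same key input --- that $f$ and $g$ are related by a M\"obius transformation, via \eqref{eq31} from the proof of Theorem~\ref{Th2}(ii) --- but they exploit it differently. The paper first derives from that relation the identity $-c\,S=Qf$ (equation \eqref{eq32}), then differentiates it twice and substitutes into $-c(P'S'-PS'')$ using \eqref{eq8} and an auxiliary identity for $PQf''$, arriving at $-c\tilde RS=RQf=-cRS$. You instead observe that the Schwarzian derivative of $f$ equals $\tfrac{P''}{P}-\tfrac32\bigl(\tfrac{P'}{P}\bigr)^2+\tfrac{2R}{P}$ (and likewise for $g$ with $\tilde R$), so that projective invariance of the Schwarzian immediately forces $R=\tilde R$; I have checked the computation $\{f,x\}=u'-\tfrac12 u^2$ with $u=P'/P-2Q'/Q$ and the collapse of the $Q$-terms to $2R/P$ via $P'Q'-PQ''=RQ$, and it is right. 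Your version is shorter and more conceptual: it isolates $2R/P$ as precisely the part of the Schwarzian not killed by M\"obius post-composition, which also explains \emph{why} the Van Vleck polynomial should be an invariant of the critical-point data rather than of the particular normalization. What it does not deliver is the identity \eqref{eq32} itself, which the paper obtains as a by-product and reuses afterwards (it underlies the description \eqref{eq33} of the general solution of the Lam\'e equation and the Wronskian computation in Lemma~\ref{l10}); so if your proof were adopted, that identity would need to be established separately. One small point worth making explicit in a final write-up: the lemma concerns the whole one-parameter family of solutions $x_1,\dots,x_n$ from Theorem~\ref{Th2}(ii), and your argument covers each member because, by the uniqueness part of that theorem, every such solution arises as the pole set of a function $f$ of the form \eqref{eq31} for a suitable choice of $x_{k_0}$.
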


\begin{proof}
Let $f$ be defined by (\ref{eq1}) and (\ref{eq7}) with $c>0$ and let
$g$ be given by (\ref{eq11}) and (\ref{eq4}) with $a>0$ and
$b\in\mathbb R$. As we have seen in the proof of Theorem \ref{Th2},
$f$ and $g$ are connected by the equation
\[
-\frac{d}{f(x)}+e=g(x)
\]
with constants $d>0$ and $e\in\mathbb R$ (i.e. they differ by an automorphism of
$\mathbb H$ that maps $0$ to $\infty$). Differentiating this equation we obtain
\[
\frac{d\,f'(x)}{f(x)^2}=g'(x),
\]
and plugging in (\ref{eq2}) and (\ref{eq30}) we arrive at
\[
\frac{cd\,P(x)}{Q(x)^2f(x)^2}=\frac{a\,P(x)}{S(x)^2},
\]
so that $a\,Q(x)^2f(x)^2=cd\,S(x)^2$. Since $Q(x)\sim x^{n}$,
$S(x)\sim x^{n-1}$, $f(x)\sim -c\,x^{-1}$ as $x \to \infty$,
we have shown the remarkable identity
\begin{equation}\label{eq32}
-c\,S(x)=Q(x)f(x).
\end{equation}
Differentiating (\ref{eq32}) we get
%(the argument shall be suppressed now)
\begin{equation} \label{eqS}
-c\,S'=Q'f+Qf',\qquad -c\,S''=Q''f+2Q'f'+Qf''.
\end{equation}
Recall from (\ref{eq2}) that $f'=c\,P/Q^2$,
%\[%f''=c\,\frac{P'Q-2PQ'}{Q^3},\]
and thus
\begin{equation} \label{eqPQ}
PQf'' =\frac{cP}{Q^2}(P'Q-2PQ')=f'(P'Q-2PQ').
\end{equation}
Using (in this order) {\rm\eqref{eq16}}, {\rm\eqref{eqS}}, {\rm\eqref{eq8}},
{\rm\eqref{eqPQ}}, and {\rm\eqref{eq32}}, we get
\begin{align*}
-c\tilde{R}S&=-c(P'S'-PS'')
=P'(Q'f+Qf')-P(Q''f+2Q'f'+Qf'')\\
&=(P'Q'-PQ'')f+(P'Q-2PQ')f'-PQf''
=RQf=-cRS,
\end{align*}
so that $R=\tilde{R}$. \qed
\end{proof}
As a consequence of this lemma we obtain that for the Van Vleck
polynomial $R$ the solution space of the Lam\'{e} equation
\begin{equation} \label{eq.Lame}
PY''-P'Y'+RY = 0
\end{equation}
consists only of polynomials, and that its general solution is given by
\begin{equation}\label{eq33}
Y(x)=\lambda Q(x)+\mu S(x), \qquad \lambda,\mu\in\mathbb{R}.
\end{equation}
Fixing $\lambda \neq 0$ and letting $\mu$ run through $\mathbb R$,
the zeros of the polynomials (\ref{eq33}) run through all
equilibrium positions of $n$ points. Solutions with $\lambda=0$
correspond to the limit configuration where one charge escaped to
infinity and only $n-1$ charges remain on the real line.
The preceding result also confirms what was said in \cite[Remark 3]{org}
about possible polynomial solutions of the Lam\'{e} equation.

Once two Stieltjes polynomials $Q$ and $S$ as solutions of {\rm\eqref{eq.Lame}}
are known, the polynomial $P$ and the Van Vleck polynomial $R$ can be
reconstructed as shows the following result.

\begin{lemma}\label{l10}
Let $Q$ and $S$ be monic polynomial solutions of degree $n$ and $n-1$,
respectively, of the Lam\'{e} equation \eqref{eq.Lame}. Then we have
\begin{equation}\label{eqPQS}
P(x)=S(x)Q'(x)-S'(x)Q(x)
\end{equation}
and
\begin{equation}\label{eqRQS}
R(x)=S'(x)Q''(x)-S''(x)Q'(x).
\end{equation}
\end{lemma}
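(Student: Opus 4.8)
The plan is to treat $Q$ and $S$ as two explicit, linearly independent solutions of the linear second order ODE \eqref{eq.Lame} and to read off $P$ and $R$ from the structural identities that any such pair satisfies. Since \eqref{eq.Lame} has the form $PY''-P'Y'+RY=0$ with $P$ monic of degree $2n-2$, the coefficient of $Y'$ is $-P'$, so the natural object to look at is the Wronskian-type combination $\bez(Q,S):=SQ'-S'Q$. Differentiating gives $(SQ'-S'Q)'=SQ''-S''Q$, and substituting $PQ''=P'Q'-RQ$ and $PS''=P'S'-RS$ (valid since both $Q$ and $S$ solve \eqref{eq.Lame}) into $P\cdot(SQ''-S''Q)$ yields $P(SQ''-S''Q)=S(P'Q'-RQ)-Q(P'S'-RS)=P'(SQ'-S'Q)$. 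Hence the function $w:=SQ'-S'Q$ satisfies $Pw'=P'w$, i.e. $(w/P)'=0$ wherever $P\neq 0$, so $w=\gamma P$ for a constant $\gamma$.

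First I would carry out exactly this computation to obtain $SQ'-S'Q=\gamma P$, then pin down $\gamma=1$ by a degree/leading-coefficient comparison: $Q$ is monic of degree $n$ and $S$ monic of degree $n-1$, so $SQ'$ has leading term $n x^{2n-2}$ while $S'Q$ has leading term $(n-1)x^{2n-2}$, giving $SQ'-S'Q$ the leading term $x^{2n-2}$, which matches the monic polynomial $P$ of degree $2n-2$. This proves \eqref{eqPQS}. (One should note in passing that the argument also re-derives that $SQ'-S'Q$ is a genuine polynomial of exactly degree $2n-2$, consistent with earlier parts of the paper.)

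Next I would establish \eqref{eqRQS}. Starting from the relation $SQ'-S'Q=P$ just proved and differentiating once more gives $SQ''-S''Q=P'$. Differentiating a second time yields $S'Q''-S''Q'+SQ'''-S'''Q=P''$; that is not immediately what is wanted, so instead the cleaner route is algebraic elimination: treat the two equations $PQ''-P'Q'+RQ=0$ and $PS''-P'S'+RS=0$ as a linear system in the unknowns $P$ and $P'$ (or, better, in $P'$ and $R$). Multiplying the first by $S'$, the second by $Q'$ and subtracting eliminates $P'$: $P(S'Q''-Q'S'')+R(S'Q-Q'S)=0$, i.e. $P(S'Q''-S''Q')-R\cdot(SQ'-S'Q)=0$. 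By \eqref{eqPQS} the bracket in the last term is $P$, so dividing by $P$ (which is not the zero polynomial) gives $R=S'Q''-S''Q'$, which is \eqref{eqRQS}.

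The computations are short and essentially forced, so there is no serious obstacle; the only point requiring a little care is the division by $P$ in the last step (and the division by $P$ implicit in passing from $Pw'=P'w$ to $w=\gamma P$), which is legitimate precisely because $P\not\equiv 0$ as a polynomial, so the identities, once verified on the dense set where $P\neq 0$, hold identically. I would also make explicit that the hypothesis is only that $Q,S$ are \emph{some} monic polynomial solutions of \eqref{eq.Lame} of the stated degrees — linear independence is automatic from the degree mismatch — so that \eqref{eqPQS}–\eqref{eqRQS} recover $P$ and $R$ intrinsically from any such pair, matching the $P$ and $R$ of Lemmas~\ref{l3} and~\ref{l4} via the normalisations already fixed there.
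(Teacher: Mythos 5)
Your proposal is correct and takes essentially the same route as the paper: for \eqref{eqPQS} it is the Wronskian/Abel argument (you derive $Pw'=P'w$ by direct substitution where the paper simply cites Abel's formula), with the constant fixed by comparing leading coefficients exactly as the paper does by noting both polynomials are monic. For \eqref{eqRQS} you eliminate $P'$ between the two instances of \eqref{eq.Lame} and divide by $P$, while the paper differentiates \eqref{eqPQS} to get $P'=SQ''-QS''$ and divides by $Q$ --- a purely cosmetic difference, and your justification of the divisions (using that $P$ has no real zeros) is sound.
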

\begin{proof}
Since  $Q$ and $S$ are linearly independent they form a fundamental system
for the differential equation. Their Wronskian
\begin{equation*}
w(x):=\begin{vmatrix}
S(x)&Q(x)\\S'(x)& Q'(x)
\end{vmatrix}
=S(x)Q'(x)-S'(x)Q(x)
\end{equation*}
is therefore non-vanishing and  Abel's formula applied to \eqref{eq8}
implies that for $x,x_0\in\mathbb R$
\begin{equation*}
 w(x)=w(x_0)\exp\left(-\int_{x_0}^x\frac{-P'(t)}{P(t)}\,dt\right)
=\frac{w(x_0)}{P(x_0)}P(x).
\end{equation*}
Since $w$ and $P$ are easily seen to be monic, the two
polynomials coincide and we have proved  \eqref{eqPQS}.
Differentiating this equation we get
\[
P'(x)=Q''(x)S(x)-Q(x)S''(x),
\]
and using {\rm\eqref{eq.Lame}}, {\rm\eqref{eqPQS}}, we finally arrive at
\[
RQ = P'Q'- PQ''=(Q''S-QS'')Q'-(SQ'-S'Q)Q'' = (S'Q''-S''Q')Q,
\]
and \eqref{eqRQS} follows. \qed
\end{proof}

%-----------------------------------------------------------------------------

\section{Convex cones and moment problems} \label{convex}

In this section we will show that a well-known problem in
\emph{moment theory} is also equivalent to the determination of a
Blaschke product with given critical points.
We start with some notions and facts from convex analysis,
our main sources are \cite{akr}, \cite{bar}, \cite{bova}, \cite{ioh},
\cite{kast}, \cite{krnu}.

A \emph{convex cone} $\mathcal{C}$ is a subset of a real vector space,
such that $u,v\in \mathcal{C}$ implies $\lambda u+\mu v\in \mathcal{C}$
for all positive $\lambda$ and $\mu$. One standard example is
the convex cone of \emph{real non-negative} polynomials of degree
at most $2n-2$,
\[
{\cal P}_{2n-1}:=\left\{(p_0,\dots, p_{2n-2})\in \mathbb R^{2n-1}:
p_0+p_1x+\dots + p_{2n-2}x^{2n-2}\geq 0,\ \text{on}\ \mathbb R\right\},
\]
% \[
% {\cal P}_{2n-1}:=\left\{(p_0,\dots, p_{2n-2})\in \mathbb R^{2n-1}:
% p_0+p_1x+\dots + p_{2n-2}x^{2n-2}\geq 0, \text{ for all } x\in\mathbb R\right\},
% \]
where we identified a polynomial with the vector of its coefficients.
The interior of this convex cone consists of all  positive
polynomials; hence the polynomial $P$ from (\ref{eq13}) belongs  to
the interior of ${\cal P}_{2n-1}$ provided that $\zeta_k\in\mathbb H $
for $k=1,\dots, n-1$.
Recall that $P$ encodes the given data, i.e., the critical points
of the Blaschke product.

Another important example is the convex cone of \emph{symmetric positive
semi-definite matrices},
\[
\mathcal{S}_+^{n}:=\{M\in \mathbb R^{n\times n}:
M=M^\top \text{ and } M\succeq 0\}.
\]
The interior of $\mathcal{S}_+^{n}$ is the convex cone of
\emph{symmetric positive definite matrices},
\[
\mathcal{S}_{++}^{n}:=\{M\in \mathbb R^{n\times n}:
M=M^\top \text{ and } M\succ 0\}.
\]
Furthermore, we consider the \emph{moment cone}
\[
{\cal M}_{2n-1}:=\left \{c=(c_0,\dots, c_{2n-2})\in \mathbb R^{2n-1}:
c_k=\int_{-\infty}^\infty t^k\,d\sigma,\
\sigma\in M_{2n-1}\right\},
\]
where  $M_{2n-1}$ is the set of nonnegative measures $\sigma$ on $\mathbb R$
such that
\[
\int_{-\infty}^\infty |t|^k\,d\sigma<\infty, \quad k=0,\dots, 2n-2.
\]
The set ${\cal M}_{2n-1}$ is the \emph{conic hull} of the \emph{moment curve}
\begin{equation}\label{equ}
C_{2n-1}:=\left\{u(t)=(1,t,\dots, t^{2n-2})\in \mathbb{R}^{2n-1}:
t\in\mathbb R\right\},
\end{equation}
i.e.,  ${\cal M}_{2n-1}$ is the smallest convex cone containing $C_{2n-1}$, cf. Theorem 2.1 in  chapter V of \cite{kast}. 
Note that ${\cal M}_{2n-1}$ is not a closed subset of $\mathbb
R^{2n-1}$ since points in the closure of this set can involve
representations with ``mass at infinity''. More precisely, $c$ belongs to
the closure $\overline{\cal M}_{2n-1}$ of ${\cal M}_{2n-1}$ if and only if
it has the representation
\begin{equation} \label{eq24}
c_k=\int_{-\infty}^\infty t^k\,d\sigma\quad (k=0,\dots, 2n-3), \qquad
c_{2n-2}=\int_{-\infty}^\infty t^{2n-2}\,d\sigma+\lambda,
\end{equation}
with $\sigma\in M_{2n-1}$ and non-negative $\lambda$ (representing a mass
at infinity). An alternative characterization of $\overline{\cal M}_{2n-1}$
is
\[
\overline{\cal M}_{2n-1}=\left\{c\in\mathbb R^{2n-1}: H(c)\succeq
0\right\},
\]
where
\begin{equation} \label{eq.hankel}
H(c)=H(c_0,\dots c_{2n-2}):=
\begin{bmatrix}
c_0&c_1&\cdots &c_{n-1}\\
c_1&c_2&\cdots &c_{n}\\
\vdots&\vdots &&\vdots\\
c_{n-1}&c_{n}&\cdots &c_{2n-2}
\end{bmatrix}
\end{equation}
denotes the \emph{Hankel matrix} associated with $c$. Also, a point
$c\in\mathbb R^{2n-1}$ is an inner point of ${\cal M}_{2n-1}$ if and
only if $H(c)$ is positive definite. Using the linear mapping
\begin{equation} \label{eq.hankelmap}
H:\mathbb{R}^{2n-1} \rightarrow \mathbb{R}^{n\times n},\ c \mapsto H(c),
\end{equation}
we can therefore write
\[
\overline{{\cal M}}_{2n-1}=H^{-1}\big(\mathcal{S}_+^{n}\big), \qquad
\interior {\cal M}_{2n-1}=H^{-1}\big(\mathcal{S}_{++}^{n}\big).
\]
A representation (\ref{eq24}) of a point  $c$ in
(the closure of) the moment cone is  usually not unique. Therefore
one searches for \emph{canonical representations} of $c$ where the
measure $\sigma\in M_{2n-1}$ is concentrated at a finite number of
points. For example, if $\sigma $ is concentrated at points
$t_1,t_2,\dots,t_{n-1}$ with masses $\sigma_1,\dots,
\sigma_{n-1}>0$ and mass $\lambda>0$ at infinity  we have
\begin{equation} \label{eq26}
c_k=\sum_{j=1}^{n-1}\sigma_j t_j^k,\quad (k=0,\dots, 2n-3), \qquad
c_{2n-2}=\sum_{j=1}^{n-1}\sigma_j t_j^{2n-2}+\lambda.
\end{equation}
If $\sigma$ is concentrated at points $x_1,x_2,\dots,x_n$ with
masses $\varrho_1,\dots, \varrho_n>0$ and has no mass at infinity
then we have
\begin{equation}\label{eq28}
c_k=\sum_{j=1}^n\varrho_j x_j^k,\qquad k=0,\dots, 2n-2.
\end{equation}
For given moments $c_k$, the \emph{moment problem} consists
in finding the \emph{roots} $t_k, x_k$ and the corresponding \emph{weights}
$\sigma_k$, $\varrho_k$, featuring in the representations {\rm\eqref{eq26}},
{\rm\eqref{eq28}}, respectively.
The following theorem
is well known (see e.g. \cite{akr}, \cite{kast}).
\begin{theorem}\label{Th3}
For any $c\in \interior {\cal M}_{2n-1}$ the following assertions hold:
\begin{enumerate}
\item[{\rm (i)}]
There is a representation {\rm (\ref{eq26})}
with uniquely determined roots $t_1<t_2<\dots <t_{n-1}$,
weights $\sigma_1,\dots, \sigma_{n-1}>0$ and $\lambda>0$.
\item[{\rm (ii)}]
Let additionally  $t_0:=-\infty$, $t_n:=+\infty$, and fix $k_0\in
\{1,2,\dots, n\}$. Then for every $x_{k_0}\in (t_{k_0-1}, t_{k_0})$
there are unique roots $x_k\in (t_{k-1}, t_k)$ for
$k=1,\dots, k_0-1, k_0+1, \dots,n$ and weights
$\varrho_1,\dots, \varrho_n>0$ such that  equation {\rm
(\ref{eq28})} is satisfied. All $x_k$ depend continuously and
monotonically on $x_{k_0}$.
\end{enumerate}
\end{theorem}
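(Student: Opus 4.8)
The plan is to derive Theorem~\ref{Th3} from the equivalence between moment representations and Stieltjes polynomials, and then invoke Theorem~\ref{Th2}. The bridge is the observation that the Hankel matrix $H(c)$ defines an inner product on the space of polynomials of degree at most $n-1$, and that finding a canonical representation of $c$ supported on the zeros of a monic polynomial $U$ of degree $n-1$ (with mass at infinity) is equivalent to requiring $U$ to be orthogonal, with respect to this inner product, to all polynomials of degree at most $n-2$; similarly a representation supported on the $n$ zeros of a monic polynomial $V$ of degree $n$ (no mass at infinity) requires $V$ to be orthogonal to all polynomials of degree at most $n-1$. This is the standard Gauss-quadrature picture, and since $c\in\interior\mathcal{M}_{2n-1}$ gives $H(c)\succ 0$, these orthogonal polynomials exist, are unique (after normalization), and have simple real roots that interlace appropriately.

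First I would introduce the polynomial $P$ (monic, degree $2n-2$) whose coefficient vector is the image of $c$ under the natural involution relating the cone $\mathcal{P}_{2n-1}$ of nonnegative polynomials to the moment cone $\mathcal{M}_{2n-1}$ — this is the duality already set up in Section~\ref{convex}. Concretely, the point is that $c\in\interior\mathcal{M}_{2n-1}$ corresponds, via the linear pairing $\langle c, p\rangle = \sum c_k p_k$, to a positive definite Hankel form, and the unique monic polynomial $P$ of degree $2n-2$ that is "reciprocal" to $c$ is positive on $\mathbb{R}$; hence by \eqref{eq13} it has the form $\prod_{k=1}^{n-1}((x-a_k)^2+b_k^2)$ for some $\zeta_k = a_k + \mathrm{i}b_k \in\mathbb{H}$. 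Then I would show that the canonical representation \eqref{eq26} is exactly the data $(t_k,\sigma_k,\lambda)$ attached to the Stieltjes polynomial $S$ from \eqref{eq14} solving \eqref{eq16} for these $\zeta_k$, and that \eqref{eq28} is exactly the data $(x_k,\varrho_k)$ attached to the Stieltjes polynomial $Q$ from \eqref{eq12} solving \eqref{eq8}. The key computational identities are \eqref{eq15} and \eqref{eq5}: reading \eqref{eq15} as $P = S^2 + \sum_k P(t_k) S_k^2$ and integrating against the linear functional defined by $c$ (for which $\int P\,d\sigma$-type quantities are read off from the moments) reproduces \eqref{eq26} with $\sigma_k = $ a positive multiple of $P(t_k)$ and $\lambda$ the leading contribution; similarly \eqref{eq5} reproduces \eqref{eq28}.

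Once this dictionary is in place, assertion (i) follows: the unique solution $t_1<\dots<t_{n-1}$ of \eqref{eq17} guaranteed by Theorem~\ref{Th2}(i) (equivalently, the unique Stieltjes polynomial $S$) gives the unique canonical representation \eqref{eq26}, with $\sigma_j>0$ because $P(t_j)>0$ (as $P$ is positive on $\mathbb{R}$) and $\lambda>0$ because $P-S^2$ has positive leading behavior is encoded in the positivity statement; uniqueness of $(t_k,\sigma_k,\lambda)$ transfers directly from uniqueness of $S$. Assertion (ii) follows the same way from Theorem~\ref{Th2}(ii): fixing $x_{k_0}\in(t_{k_0-1},t_{k_0})$ determines a unique $Q$ with the interlacing root pattern, hence unique $x_k$ and unique positive weights $\varrho_k$ via \eqref{eq7}, with the continuity and monotonicity in $x_{k_0}$ inherited verbatim.

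The main obstacle I anticipate is setting up the "reciprocity" map between $c\in\interior\mathcal{M}_{2n-1}$ and the positive polynomial $P$ cleanly, and verifying that integrating the interpolation identities \eqref{eq15}, \eqref{eq5} against the functional associated with $c$ indeed returns the moment equations \eqref{eq26}, \eqref{eq28} with the right weights — this requires matching normalizations (the constants $c$, $a$ appearing in \eqref{eq2}, \eqref{eq30} versus the leading moment $c_{2n-2}$) and checking that the "mass at infinity" $\lambda$ is precisely the defect $c_{2n-2} - \sum\sigma_j t_j^{2n-2}\ge 0$, which is positive exactly because the Hankel matrix is positive definite rather than merely semidefinite. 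Everything else is a transcription of Theorem~\ref{Th2}, so I would keep the proof short, citing \cite{akr}, \cite{kast} for the classical orthogonal-polynomial facts where convenient and emphasizing only the identification with the Blaschke/Stieltjes picture.
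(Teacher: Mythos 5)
You should first note that the paper does not prove Theorem~\ref{Th3} at all: it is quoted as a classical fact of moment theory with references to \cite{akr} and \cite{kast}, and the dictionary with Theorem~\ref{Th2} that you want to exploit is only established \emph{afterwards}, in Theorem~\ref{thm4}. Your opening paragraph --- $H(c)\succ 0$ induces an inner product on polynomials of degree at most $n-1$, and the canonical representations are Gauss-type quadratures whose nodes are zeros of (quasi-)orthogonal polynomials --- is essentially that classical proof, and if carried out carefully it suffices on its own. One bookkeeping slip there: for part (ii) the monic degree-$n$ polynomial $V$ must be orthogonal to polynomials of degree at most $n-2$, not $n-1$ (the latter would require the unavailable moment $c_{2n-1}$ and would kill the one-parameter freedom that part (ii) asserts); it is precisely this one remaining degree of freedom that is fixed by prescribing $x_{k_0}$.

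The route you actually develop, through Theorem~\ref{Th2} and a positive polynomial $P$ attached to $c$, has a genuine gap. Applying the linear functional $L_c(x^k):=c_k$ to the identity \eqref{eq15} produces a single scalar equation, not the system of $2n-1$ moment equations \eqref{eq26}; moreover the weights are $\sigma_k=1/p(t_k)$ (see \eqref{eq.sigrho}), i.e.\ reciprocals, not positive multiples of $P(t_k)$ as you assert. Verifying that the equilibrium nodes reproduce \emph{all} prescribed moments is exactly the nontrivial content of the Bezoutian lemmas in Section~\ref{convex}, and those lemmas construct a moment vector $c'$ from $P$ (via $H(c')^{-1}=p_{2n-2}\bez(Q,S)$) and prove the representation for \emph{that} $c'$. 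To transfer the conclusion to your given $c$ with $N(c)=N(c')$ you need injectivity of the Nesterov map --- which the paper proves \emph{using} Theorem~\ref{Th3}. As written, your argument is therefore circular. To repair it, either complete the classical orthogonal-polynomial argument of your first paragraph (which makes Theorem~\ref{Th2} unnecessary for this statement), or supply an independent proof that $N$ is injective (e.g.\ via Nesterov's barrier-gradient theorem mentioned in the paper) together with the full Bezoutian computation identifying the quadrature data with the equilibrium data.
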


\noindent
Before going on, we observe that the moment problem
{\rm\eqref{eq28}} can be rephrased as a matrix factorization problem. Let
\begin{equation*}
V(x_1,\dots, x_n):=
\begin{bmatrix}
1&1&\cdots&1\\
x_1&x_2&\cdots&x_n\\
\vdots&\vdots&&\vdots\\
x_1^{n-1}&x_2^{n-1}&\cdots &x_n^{n-1}
\end{bmatrix}
\end{equation*}
denote the \emph{Vandermonde matrix} of the points $x_1,\dots, x_n$. With the
abbreviations $V:=V(x_1,\dots, x_n)$, $D:=\diag (\varrho_1,\dots, \varrho_n)$,
and $H(c)$ from {\rm\eqref{eq.hankel}}, the equations (\ref{eq28}) can be
rewritten as
\begin{equation}
\label{eq29}
H(c)=VDV^\top.
\end{equation}
This representation is known as the \emph{Vandermonde factorization} of
the (positive definite) Hankel matrix $H(c)$ (see Heinig and Rost \cite{hero2},
\cite{hero1}).

There is also a nice way to obtain the roots
$t_k, x_k$. The polynomial $D_{n-1}$ defined by
\begin{equation} \label{eq.detD}
D_{n-1}(x):=\det
\begin{bmatrix}
c_0&c_1&\cdots &c_{n-2}&1\\
c_1&c_2&\cdots &c_{n-1}&x\\
\vdots&\vdots &&\vdots\\
c_{n-1}&c_{n}&\cdots &c_{2n-3}&x^{n-1}
\end{bmatrix}
\end{equation}
is of (exact) degree $n-1$ and its roots  are  $t_1,\dots,
t_{n-1}$. To see this we observe that in view of $\det H(c_0,\dots,
c_{2n-4})>0$ the first $n-1$ columns of the matrix in
{\rm\eqref{eq.detD}} are linearly
independent and in view of (\ref{eq26}) each of them is a linear
combination of the first $n-1$ columns of $V(t_1,\dots,t_{n-1},x)$.
Hence the spaces spanned by the first $n-1$ columns of these two matrices
coincide, such that  the two determinants $D_{n-1}(x)$ and $\mathrm{det}\,
V(t_1,\dots,t_{n-1},x)$ vanish for the same values of $x$.

Similarly, in view of $D_{n-1}(x_{k_0})\neq 0$ the first $n$ columns of the matrix in
\[
E_n(x):=\det \begin{bmatrix}
c_0& c_1&\cdots &c_{n-2}&1&1\\
c_1&c_2&\cdots &c_{n-1}&x_{k_0}&x\\
\vdots&\vdots&&\vdots&\vdots&\vdots\\
c_{n-1}&c_n&\cdots &c_{2n-3}&x_{k_0}^{n-1}&x^{n-1}\\
c_n&c_{n+1}&\cdots & c_{2n-2}&x_{k_0}^n&x^n
\end{bmatrix}
\]
are linearly independent and by (\ref{eq28}) we find that the zeros
of $E_n(x)$ coincide with the zeros of
$\det V(x_1,\dots, x_n,x)$. Hence the zeros of
$E_n(x)$ are the roots described in
Theorem~\ref{Th3} (ii).
\smallskip

There is an evident similarity between Theorem~\ref{Th2} and Theorem~\ref{Th3}.
As we will see in a moment,
the solutions described in both theorems coincide if we map the polynomial
$P\in \interior {\cal P}_{2n-1}$ from (\ref{eq13}) to an
appropriate point $c\in\interior{\cal M}_{2n-1}$. This mapping was
investigated by Nesterov~\cite{nest} and will be described below.
Hachez and  Nesterov~\cite{hane} used it together with the
Vandermonde factorization (\ref{eq29}) to represent a positive
polynomial as a weighted sum of squares of Lagrange interpolation
polynomials as in  formula (\ref{eq5}). This yields
another equivalent reformulation of the original Blaschke product
problem.

In order to describe a mapping between $\interior {\cal P}_{2n-1}$
and $\interior{\cal M}_{2n-1}$ we have to recall the notion of
duality. If $K$ is a convex cone in a real Hilbert space $V$ with
scalar product $\langle\cdot,\cdot\rangle$, its \emph{dual cone} is
$K^*:=\{x\in V:\langle x,y\rangle \geq 0 \text{ for all } y\in K\}$.
In the sequel we use scalar products for vectors
$x=(x_k), y=(y_k) \in \mathbb R^p$ and matrices
$X=(x_{kj}),Y=(y_{kj})\in\mathbb R^{p\times q}$ defined by
\[
\langle x,y\rangle:=\sum_{k=1}^p x_k y_k,  \qquad
\langle X,Y\rangle:=\sum_{k=1}^p\sum_{j=1}^q x_{kj}y_{kj}.
\]
With respect to these scalar products we have the following duality relations:
\[
{\cal P}_{2n-1}^*=\overline{\cal M}_{2n-1}, \qquad
{\cal M}_{2n-1}^*={\cal P}_{2n-1},\qquad (\mathcal{S}_+^n)^*=\mathcal{S}_+^n.
\]
Let
\[
H^*: \mathbb{R}^{n\times n} \mapsto \mathbb R^{2n-1},\ X=(x_{ij}) \mapsto y=(y_k),
\quad \text{with } y_k=\sum_{\substack{i,j=0\\i+j=k}}^{n-1} x_{ij},
\]
be the mapping which sends a  matrix $X$ to the vector $y$ of its \emph{anti-diagonal sums}.
The mapping $H^*$ is dual to the mapping $H$ defined in \eqref{eq.hankelmap}  in the sense
that
\[
\langle H(x),X\rangle=\langle x, H^*(X)\rangle, \qquad
x\in \mathbb R^{2n-1},\ X\in \mathbb R^{n\times n}.
\]
Moreover, the cone of non-negative (positive) polynomials can be obtained as
the image of the cone of non-negative (positive) definite matrices,
\[
{\cal P}_{2n-1}=H^*\big(\mathcal{S}^n_+\big),\qquad
\interior {\cal P}_{2n-1}=H^*\big(\mathcal{S}_{++}^n\big).
\]
The mapping $H^*$ has an interesting interpretation when we identify a
vector $c\in\mathbb R^{2n-1}$ with the polynomial
$c(x)=\langle c, u(x)\rangle=c_0+c_1x+\dots+ c_{2n-2}x^{2n-2}$,
and a matrix $X\in\mathbb R^{n\times n}$ with the polynomial in two variables
\[
X(x,y):=\big\langle u(x), u(y)X^\top\big\rangle
=\sum_{i=0}^{n-1}\sum_{j=0}^{n-1}x_{ij}x^iy^j,
\]
where $u$ is the function defined in \eqref{equ}. Then $H^*$ is the operator
of equating variables that maps $X=X(x,y)$ to the polynomial
\begin{equation}
\label{eq34}
(H^*X)(x)=X(x,x).
\end{equation}
Nesterov~\cite{nest} introduced the mapping
\begin{equation*}% \label{eq.Nest}
c\mapsto N(c):=H^*\left(H(c)^{-1}\right).
\end{equation*}
which is defined for $c\in\mathbb R^{2n-1}$ whenever $H(c)$ is nonsingular.
If $H(c)$ is even positive definite, the inverse matrix $H(c)^{-1}$ is also
positive definite and we have for $x\in\mathbb R$
\[
N(c)(x)=H^*\left(H(c)^{-1}\right)(x)=
\left\langle u(x),u(x) H(c)^{-\top}\right\rangle >0,
\]
i.e., $N(c)$ is (the coefficient vector of) a positive polynomial. Thus
the \emph{Nesterov mapping} $N$  maps $\interior {\cal M}_{2n-1}$ to
$\interior{\cal P}_{2n-1}$.  Nesterov~\cite{nest} also remarked that $N$ is
the (negative) gradient of the function
\[
h(c):=-\log\det H(c), \qquad c\in \interior{\cal M}_{2n-1},
\]
which is a  \emph{(strongly non-degenerate self-concordant) barrier functional}
for ${\cal M}_{2n-1}$, see \cite{nene} or \cite{ren} for definitions and basic
properties. Hence the gradient of $h$ is a \emph{bijection} of
$\interior {\cal M}_{2n-1}$ onto the interior $\interior {\cal P}_{2n-1}$ of
its dual cone ${\cal P}_{2n-1}$.
We will give an alternative proof of this fact, demonstrating
how the Nesterov mapping $N$ connects Theorem~\ref{Th2} and Theorem~\ref{Th3}.

\begin{theorem}\label{thm4}
Let $p\in \interior {\cal P}_{2n-1}$ be a positive polynomial with  zeros
$\zeta_1,\dots,\zeta_{n-1}$  in the upper half plane.
Then there is a unique  $c\in \interior {\cal M}_{2n-1}$ such that  $p=N(c)$.
If   $x_1,\dots,x_n$ and $t_1,\dots,t_{n-1}$ satisfy \eqref{eq10} and
\eqref{eq17}, respectively, then the canonical representations \eqref{eq26} and
\eqref{eq28} hold with the  positive numbers
\begin{equation} \label{eq.sigrho}
\sigma_k=\frac{1}{p(t_k)}\ (k=1,\dots, n-1),\
\varrho_k= \frac{1}{p(x_k)} \ (k=1,\dots, n),\
\lambda=1/p_{2n-2},
\end{equation}
where $p_{2n-2}$ is the leading coefficient of $p$.
Conversely, if \eqref{eq26} and \eqref{eq28} are satisfied for $x_1,\dots,x_n$,
$t_1,\dots,t_{n-1}$ and positive numbers $\sigma_k,\varrho_k,\lambda$, then
also the equilibrium conditions \eqref{eq10} and \eqref{eq17} are true.
Moreover, $\sigma_k$, $\varrho_k$, and $\lambda$ satisfy {\rm\eqref{eq.sigrho}}.
\end{theorem}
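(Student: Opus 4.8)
The plan is to exploit the identity \eqref{eq32}, $-c\,S(x)=Q(x)f(x)$, together with the partial fraction expansions \eqref{eq1}, \eqref{eq11}, to compute $N(c)$ explicitly when $c$ is chosen via the Vandermonde-type data. I would proceed in three stages. First, I would establish existence and uniqueness of $c\in\interior{\cal M}_{2n-1}$ with $p=N(c)$: since $N$ is the negative gradient of the self-concordant barrier $h(c)=-\log\det H(c)$ on the cone ${\cal M}_{2n-1}$, and $p\in\interior{\cal P}_{2n-1}$ lies in the interior of the dual cone, the general theory of self-concordant barriers (the Legendre-type duality for the barrier) gives that the gradient is a bijection $\interior{\cal M}_{2n-1}\to\interior{\cal P}_{2n-1}$. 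But to keep the argument self-contained and to connect with Theorems~\ref{Th2} and~\ref{Th3}, I would instead argue directly: given $p$, use Theorem~\ref{Th2}(i) to produce $t_1<\dots<t_{n-1}$ solving \eqref{eq17}, define $\sigma_k:=1/p(t_k)>0$ and $\lambda:=1/p_{2n-2}>0$, and let $c$ be defined by \eqref{eq26}. Then $H(c)$ is positive definite (it is a sum of rank-one terms $\sigma_j u(t_j)u(t_j)^\top$ plus the mass-at-infinity contribution $\lambda$ in the bottom-right corner, and one checks nonsingularity using that there are $n$ ``nodes'' counting infinity), so $c\in\interior{\cal M}_{2n-1}$.

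Second, the crucial computation: I must verify $N(c)=p$, i.e.\ $H^*(H(c)^{-1})=p$ as polynomials. The key is the representation \eqref{eq15}, $P(x)=S(x)^2+\sum_{k=1}^{n-1}P(t_k)S_k(x)^2$, which expresses $P$ as a sum of squares of the Lagrange polynomials $S_k$ and of $S$ itself. Writing $S_k$ and $S$ in the monomial basis as vectors in $\mathbb R^n$ (padding with a zero in the degree-$(n-1)$ slot for the degree-$(n-2)$ polynomials $S_k$, and noting $S$ is monic of degree $n-1$), the right-hand side of \eqref{eq15} is exactly $\langle u(x),u(x)M^\top\rangle$ for the matrix $M=\sum_k P(t_k)\,s_k s_k^\top+s_\infty s_\infty^\top$, where $s_k,s_\infty$ are those coefficient vectors. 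Since $P=p$ (both are the monic degree-$2n-2$ polynomial with zeros $\zeta_k,\overline\zeta_k$, up to the leading coefficient $p_{2n-2}$ — here one must be careful: $P$ is monic while $p$ need not be, so the correct normalization is $p=p_{2n-2}P$, which is why $\lambda=1/p_{2n-2}$ and $\sigma_k=1/p(t_k)=1/(p_{2n-2}P(t_k))$ appear), it remains to identify $M$ with $H(c)^{-1}$. This follows from the biorthogonality $S_k(t_j)=\delta_{kj}$ together with the fact that $s_\infty$ (the leading-coefficient vector $e_{n-1}$) pairs with $u(t_j)$ as $t_j^{n-1}/1$: concretely one shows $M\cdot H(c)=I$ by checking that $M$ applied to each rank-one generator $\sigma_j u(t_j)u(t_j)^\top$ (and the $\lambda$ term) reproduces the identity, using $\sum_k P(t_k)s_k\langle s_k,u(t_j)\rangle/p(t_j)=\dots$ and the interpolation property. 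This is essentially the Heinig--Rost inversion formula for Hankel matrices, and it is the step I expect to be the main obstacle — getting the bookkeeping of degrees, the monic-versus-$p_{2n-2}$ normalization, and the role of the infinity node all consistent.

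Third, I would handle part (ii) and the converse. For \eqref{eq28}: given $x_{k_0}\in(t_{k_0-1},t_{k_0})$, Theorem~\ref{Th2}(ii) furnishes $x_1,\dots,x_n$ solving \eqref{eq10}; by the same mechanism, \eqref{eq5}, $P(x)=\sum_k P(x_k)Q_k(x)^2$, expresses $P$ as a sum of $n$ squares of degree-$(n-1)$ Lagrange polynomials, and the identical linear-algebra argument with $V=V(x_1,\dots,x_n)$ and $D=\diag(1/p(x_1),\dots,1/p(x_n))$ yields $H(c)=VDV^\top$, i.e.\ \eqref{eq29} with $\varrho_k=1/p(x_k)$; since this is the Vandermonde factorization with the $x_k$ in the prescribed intervals and $D_{n-1}(x_{k_0})\ne 0$, these are exactly the roots of $E_n$ from Theorem~\ref{Th3}(ii), so uniqueness transfers. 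For the converse direction, suppose \eqref{eq26}, \eqref{eq28} hold with positive weights: then $H(c)=VDV^\top$ with $D\succ0$, so $H(c)\succ0$, hence $c\in\interior{\cal M}_{2n-1}$ and $N(c)$ is a well-defined positive polynomial with zeros in $\mathbb H$; by the uniqueness in Theorem~\ref{Th3} the nodes $t_k$ (resp.\ $x_k$) are forced, and by the forward direction applied to $p:=N(c)$ they must satisfy \eqref{eq17} (resp.\ \eqref{eq10}), while matching the rank-one decompositions of $H(c)$ in two ways and using that the Vandermonde factorization with prescribed node set is unique forces $\sigma_k=1/p(t_k)$, $\varrho_k=1/p(x_k)$, $\lambda=1/p_{2n-2}$, which is \eqref{eq.sigrho}. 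The only real subtlety beyond the central Hankel-inversion computation is ensuring the degree count in \eqref{eq15} ($P-S^2$ has degree $\le 2n-3$, so exactly $2n-2$ coefficient slots are filled by the $2n-2$ functions $S,S_1,\dots,S_{n-1}$ — wait, $S$ plus $n-1$ of the $S_k$ is $n$ functions, not $2n-2$; the point is they span the space of polynomials of degree $\le n-1$, which has dimension $n$, matching the size of $H(c)$), so I would state this dimension match explicitly to justify that $M$ is exactly $H(c)^{-1}$ rather than merely a right inverse.
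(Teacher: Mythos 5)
Your forward direction is essentially sound and takes a mildly different route from the paper: instead of introducing the Bezoutian $B=\bez(Q,S)$ from \eqref{eq37} and using that $H^*B$ equals the Wronskian $SQ'-S'Q=P$ (Lemma~\ref{l10}), you build the candidate inverse $M=\sum_k P(t_k)s_ks_k^\top+s_Ss_S^\top$ directly from the sum-of-squares identity \eqref{eq15} and verify $MH(c)=p_{2n-2}^{-1}I$ by Lagrange biorthogonality. That computation does go through (the key identity is $\sum_k s_ku(t_k)^\top+s_Se_{n-1}^\top=I$, which is interpolation at $t_1,\dots,t_{n-1}$ plus matching of leading coefficients), and it buys you a more self-contained argument that avoids citing the Hankel--Bezoutian inversion theorem. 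One slip to fix: $s_\infty$ must be the full coefficient vector of the monic polynomial $S$, not $e_{n-1}$; what pairs to $1$ is $\langle s_S,e_{n-1}\rangle$ (the leading coefficient of $S$), while $\langle s_S,u(t_j)\rangle=S(t_j)=0$. With $s_\infty=e_{n-1}$ you would get $H^*(M)=\sum_kP(t_k)S_k^2+x^{2n-2}\neq P$.

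There are, however, two genuine gaps. First, in your treatment of \eqref{eq28} you assert ``$H(c)=VDV^\top$'', but your argument only shows that $VDV^\top$ is a positive definite Hankel matrix $H(c')$ with $N(c')=p$; nothing you have said identifies $c'$ with the $c$ built from the $t_k$-representation, and the uniqueness in Theorem~\ref{Th3}(ii) cannot be invoked until both node systems are known to represent the \emph{same} moment vector. The paper closes this by showing that the single matrix $p_{2n-2}\bez(Q,S)$ is simultaneously the inverse of both candidate Hankel matrices (via $B(x,x_k)=Q_k(x)Q'(x_k)S(x_k)$ and $B(t_k,y)=-Q(t_k)S'(t_k)S_k(y)$), which forces them to coincide. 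Second, and more seriously, your converse and injectivity arguments are circular: you propose to conclude that the canonical nodes of an arbitrary $c$ with $N(c)=p$ satisfy \eqref{eq17} ``by the forward direction applied to $p:=N(c)$'', but the forward direction only says that equilibrium points \emph{yield} a representation; it does not say that the nodes of \emph{every} canonical representation of \emph{every} preimage of $p$ are equilibrium points, which is exactly the statement to be proved. The paper's Lemma~\ref{lem.converse} does this non-circularly: it writes $H(c)^{-1}$ as a Bezoutian of two polynomials, uses the relations coming from \eqref{eq26} and \eqref{eq28} to force those polynomials to be $\prod(x-x_k)$ and $\prod(x-t_k)$, and then recognizes $g=Q_1/S$ as a function of the form \eqref{eq11} with $g'=P/S^2$, so that Lemmas~\ref{l4} and~\ref{l5} deliver \eqref{eq17} and \eqref{eq10}. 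Without some substitute for this step, your proof establishes surjectivity of $N$ but neither the uniqueness of $c$ nor the converse half of the theorem.
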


The proof of Theorem~\ref{thm4} is split into several lemmas.
Without loss of generality we always assume that $x_1<\dots< x_n$
and $t_1<\dots<t_{n-1}$.

In the following we denote by $\bez (v,w)$ the \emph{Bezoutian} of
$v,w\in \mathbb R^{n+1}$ (interpretable as polynomial of degree $\leq n$),
which is the matrix $B=(b_{ij})\in\mathbb R^{n\times n}$ defined by
\[
B(x,y)=\sum_{i=0}^{n-1}\sum_{j=0}^{n-1}b_{ij}x^iy^j=\frac{v(x)w(y)-w(x)v(y)}{x-y},
\]
see~\cite{fuh}, \cite{hero2}, \cite{hero1} for more information on this topic.
Recall that the inverse of a non-singular Hankel matrix is a non-singular
Bezoutian, and vice versa.

\begin{lemma}
For each $p\in \interior{\cal P}_{2n-1}$ there exists a vector $c\in\mathbb R^{2n-1}$
with $N(c)=p$.
\end{lemma}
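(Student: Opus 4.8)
The plan is to produce $c$ directly from the polynomial $p$, exploiting the structure established in Section~\ref{stieltjes} rather than invoking the general surjectivity of gradients of self-concordant barriers. First I would recall that $p \in \interior{\cal P}_{2n-1}$ has exactly $n-1$ zeros $\zeta_1,\dots,\zeta_{n-1}$ in $\mathbb H$ (and their conjugates in the lower half plane), so $p$ plays the role of the polynomial $P$ from \eqref{eq13}. By Lemma~\ref{l5} and Theorem~\ref{Th2}(i), the equilibrium equation \eqref{eq17} has a solution $t_1 < \dots < t_{n-1}$, and by \eqref{eq15} we have the identity $p(x) = S(x)^2 + \sum_{k=1}^{n-1} p(t_k)\,S_k(x)^2$, where $S(x) = \prod_k (x-t_k)$ and $S_k$ are the Lagrange polynomials with nodes $t_k$. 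This is precisely a representation of $p$ as a nonnegative combination of squares of polynomials of degree $\le n-1$.

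The key step is to turn this sum-of-squares representation into the matrix identity $H^*(M) = p$ for a suitable $M \in \mathcal{S}_{++}^n$, and then set $c := H(M^{-1})^{-1}$ — or more directly, to recognize $M^{-1}$ as a Hankel matrix and take $c$ to be its antidiagonal-generating vector. Concretely: writing $\sigma_k := 1/p(t_k) > 0$ and $\lambda := 1/p_{2n-2} > 0$, I would consider the point $c$ with coordinates given by \eqref{eq26}, i.e. $c_j = \sum_k \sigma_k t_k^j$ for $j \le 2n-3$ and $c_{2n-2} = \sum_k \sigma_k t_k^{2n-2} + \lambda$. Since the $t_k$ are distinct and the $\sigma_k$ positive, the Hankel matrix $H(c)$ is positive definite (it equals a Vandermonde-type Gram matrix plus a rank-one positive-semidefinite correction from the mass $\lambda$ at infinity), so $c \in \interior{\cal M}_{2n-1}$. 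It then remains to verify that $N(c) = H^*(H(c)^{-1}) = p$, which amounts to showing that the Bezoutian $H(c)^{-1}$, when its antidiagonal sums are formed (equivalently, when the associated bivariate polynomial is restricted to the diagonal $x=y$), reproduces $p(x)$. Here I would use the Bezoutian/Hankel inverse duality cited after Theorem~\ref{thm4}: one shows $\bez$ of the pair $(S,\, \text{companion-type polynomial})$ is $H(c)^{-1}$ up to scaling, and that the diagonal restriction of this Bezoutian is exactly the sum of squares $S^2 + \sum \sigma_k^{-1} S_k^2$ rescaled — the factor $1/p(t_k) = \sigma_k$ reappearing as the weights.

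The main obstacle I anticipate is the last verification: matching $N(c)(x) = \langle u(x), u(x) H(c)^{-\top}\rangle$ with the explicit sum-of-squares formula \eqref{eq15} for $p$. The clean way is probably to use the Vandermonde factorization: if $H(c) = V D V^\top + \lambda\, e_{n}e_{n}^\top$-type decomposition (with $V = V(t_1,\dots,t_{n-1})$ an $n \times (n-1)$ Vandermonde block and $D = \diag(\sigma_1,\dots,\sigma_{n-1})$, together with the infinite mass handled as a limit of a genuine $n$-point factorization), then inverting and applying $H^*$ converts the columns of the inverse Vandermonde into the Lagrange polynomials $S_k$, and the diagonal evaluation $H^*$ squares them, producing $\sum_k \sigma_k^{-1} S_k(x)^2 + (\text{term from }\lambda)$; comparison with \eqref{eq15} forces this to equal $p$. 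Since this lemma only claims \emph{existence} of $c$, I can afford to be slightly wasteful: it suffices to exhibit one explicit $c$ and check $N(c)=p$, deferring uniqueness and the equilibrium-condition equivalence \eqref{eq10}, \eqref{eq28} to the later lemmas that complete the proof of Theorem~\ref{thm4}. Accordingly I would present the construction of $c$ from the solution of \eqref{eq17} via \eqref{eq15}, establish $H(c) \succ 0$, and conclude by the duality $\interior{\cal P}_{2n-1} = H^*(\mathcal{S}_{++}^n)$ combined with the Bezoutian identity that $N(c) = p$.
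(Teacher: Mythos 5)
Your proposal is correct, but it follows a genuinely different route from the paper's. The paper constructs $c$ from the Bezoutian $B=\bez(Q,S)$ of the \emph{two} Stieltjes polynomials supplied by Theorem~\ref{Th2}: by the Wronskian identity \eqref{eqPQS} of Lemma~\ref{l10} one has $H^*(B)=SQ'-S'Q=P$, and since the inverse of a non-singular Bezoutian is a Hankel matrix there is $d$ with $H(d)=B^{-1}$, whence $c:=p_{2n-2}^{-1}d$ does the job. You instead use only the equilibrium points $t_1<\dots<t_{n-1}$ of \eqref{eq17}, define $c$ outright by the canonical representation \eqref{eq26} with $\sigma_k=1/p(t_k)$ and $\lambda=1/p_{2n-2}$, and verify $N(c)=p$ directly. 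This verification does go through, and no limiting argument for the mass at infinity is needed: writing $v(t):=(1,t,\dots,t^{n-1})^{\top}$, one has exactly $H(c)=W\Delta W^{\top}$ with $W=[\,v(t_1),\dots,v(t_{n-1}),e_n\,]$ and $\Delta=\diag(\sigma_1,\dots,\sigma_{n-1},\lambda)$, which gives $H(c)\succ0$ at once; the rows of $W^{-1}$ are the coefficient vectors of $S_1,\dots,S_{n-1},S$, so
\[
N(c)(x)=v(x)^{\top}H(c)^{-1}v(x)=\sum_{k=1}^{n-1}p(t_k)\,S_k(x)^2+p_{2n-2}\,S(x)^2,
\]
which equals $p(x)$ by \eqref{eq15}. (Minor slip: since \eqref{eq15} is stated for the monic $P$, the identity for $p=p_{2n-2}P$ reads $p=p_{2n-2}S^2+\sum_k p(t_k)S_k^2$, with the leading coefficient in front of $S^2$; your displayed version omits it, but your final computation is consistent with the corrected form.) Your route is more elementary --- it bypasses the Bezoutian-inverse-of-Hankel theorem and Lemma~\ref{l10} entirely, needs only one of the two equilibrium configurations, and delivers $c\in\interior{\cal M}_{2n-1}$ together with the canonical representation \eqref{eq26} in the same stroke, thereby absorbing part of the paper's third lemma. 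What the paper's construction buys is the explicit Bezoutian $\bez(Q,S)$, which is the workhorse of all the subsequent lemmas in the section. The one murky spot in your write-up is the middle paragraph's appeal to a Bezoutian of $S$ with a ``companion-type polynomial''; that detour is unnecessary and should be replaced by the direct factorization argument you sketch afterwards.
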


\begin{proof}
Let $p(x)=p_{2n-2}x^{2n-2}+\dots+ p_0$ be a  positive polynomial with zeros
$\zeta_k\in\mathbb H$ and $\overline{\zeta}_k$ for  $k=1,\dots, n-1$.
Then $p(x)=p_{2n-2}P(x)$ with $p_{2n-2}>0$ and the monic polynomial $P$ from
\eqref{eq13}. Let $Q$ and $S$ be the polynomials \eqref{eq12} and \eqref{eq14}
with zeros at the equilibrium points $x_1<\dots< x_n$ and $t_1<\dots<t_{n-1}$
according to Theorem \ref{Th2}.
Let further  $B=(b_{ij})=\bez (Q,S)$ be the Bezoutian of $Q$ and $S$, i.e.,
\begin{equation}\label{eq37}
B(x,y)=\sum_{i=0}^{n-1}\sum_{j=0}^{n-1} b_{ij}x^iy^j
=\frac{Q(x)S(y)-S(x)Q(y)}{x-y}.
\end{equation}
The application of $H^*$ yields in view of \eqref{eq34}
\[
(H^*B)(x)=B(x,x)=\lim_{y\to x}\frac{Q(x)S(y)-S(x)Q(y)}{x-y}
= -Q(x)S'(x)+S(x)Q'(x).
\]
We have shown in equation \eqref{eqPQS} of Lemma \ref{l10} that $P$ is the Wronskian
of $S$ and $Q$, from which we conclude that $H^*B=P$. Since $B$ is a non-singular
Bezoutian, its inverse is a Hankel matrix, hence there is $d\in\mathbb R^{2n-1}$
with $H(d)=B^{-1}$. This vector $d$ satisfies $N(d)=H^*\left(H(d)^{-1}\right)=H^*(B)=P$.
Defining $c:=p_{2n-2}^{-1}d$ we get $N(c)=p_{2n-2}N(d)=p_{2n-2}P=p$. \qed
\end{proof}

We will soon see that even $c\in \interior {\cal M}_{2n-1}$. Before that we show:

\begin{lemma}
If $x_1,\dots, x_n$ satisfy {\rm\eqref{eq10}} and $\varrho_1,\dots\varrho_n$ are
given by {\rm\eqref{eq.sigrho}}, then the canonical representation \eqref{eq28} holds.
\end{lemma}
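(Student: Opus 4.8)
The plan is to show that the Vandermonde factorization of the Hankel matrix $H(c)$ encodes exactly the desired representation \eqref{eq28}. First I would recall from the previous lemma that $c = p_{2n-2}^{-1}d$ where $H(d) = B^{-1}$ with $B = \bez(Q,S)$. Hence $H(c)^{-1} = p_{2n-2}\,B$, so it suffices to understand $B$ in terms of the equilibrium points $x_k$.

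The key observation I would use is the classical fact that the Bezoutian $\bez(Q,S)$, when $Q$ has simple real roots $x_1 < \dots < x_n$, diagonalizes against the Vandermonde matrix $V = V(x_1,\dots,x_n)$. Concretely, plugging $x = x_i$ into \eqref{eq37} gives $B(x_i,y) = \frac{-S(x_i)Q(y)}{x_i - y} = S(x_i) \cdot \frac{Q(y)}{y - x_i} = S(x_i)\,Q'(x_i)\,Q_i(y)$, where $Q_i$ is the Lagrange polynomial from \eqref{lagrange}. Therefore $u(x_i)^\top B = S(x_i)Q'(x_i)\,(\text{coefficient vector of } Q_i)$, i.e. $V^\top B V = \diag\big(S(x_i)Q'(x_i)\big)$ since $Q_i(x_j) = \delta_{ij}$. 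Now by Lemma~\ref{l10}, formula \eqref{eqPQS}, we have $P(x_i) = S(x_i)Q'(x_i) - S'(x_i)Q(x_i) = S(x_i)Q'(x_i)$ because $Q(x_i)=0$; and since $P(x_i) > 0$ (as $\zeta_k \in \mathbb{H}$), all these diagonal entries are positive. So $V^\top B V = \diag(P(x_1),\dots,P(x_n))$, which inverts to $V^{-1} B^{-1} V^{-\top} = \diag(1/P(x_i))$, whence
\[
H(d) = B^{-1} = V\,\diag\!\big(1/P(x_1),\dots,1/P(x_n)\big)\,V^\top.
\]
Scaling by $p_{2n-2}^{-1}$ and using $p = p_{2n-2}P$, i.e. $p(x_i) = p_{2n-2}P(x_i)$, gives $H(c) = V\,\diag(1/p(x_1),\dots,1/p(x_n))\,V^\top = VDV^\top$ with $D = \diag(\varrho_1,\dots,\varrho_n)$ and $\varrho_k = 1/p(x_k)$. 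Reading off the anti-diagonal structure, this is precisely \eqref{eq29}, which is a restatement of \eqref{eq28}.

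I would also need to confirm that $x_1,\dots,x_n$ really are the equilibrium points appearing in Theorem~\ref{Th2}, i.e. that the $Q$ built from a solution of \eqref{eq10} is the same $Q$ as in the previous lemma; this is immediate since both refer to the ordered-roots solution guaranteed by Lemma~\ref{l5} and Theorem~\ref{Th2}. The main (minor) obstacle is getting the Bezoutian--Vandermonde identity $V^\top \bez(Q,S) V = \diag(P(x_i))$ cleanly; this is standard (see the cited references \cite{fuh}, \cite{hero2}, \cite{hero1}) but one must be careful with the evaluation $B(x_i, y)$ and the sign conventions in \eqref{eq37}. Everything else is routine linear algebra combined with the already-established identity \eqref{eqPQS}. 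As a byproduct, since $D$ has positive diagonal entries and $V$ is invertible, $H(c) = VDV^\top \succ 0$, so $c \in \interior{\cal M}_{2n-1}$, which is the claim flagged in the sentence preceding the lemma.
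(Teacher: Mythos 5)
Your proof is correct and takes essentially the same route as the paper's: both rest on evaluating the Bezoutian at the nodes to get $B(x_k,y)=S(x_k)Q'(x_k)Q_k(y)$, on the identity $P(x_k)=S(x_k)Q'(x_k)$ from \eqref{eqPQS}, and on the invertibility of the square Vandermonde matrix. The paper merely packages the linear algebra as ``a right inverse of $V^\top$ is also a left inverse'' instead of your congruence $V^\top B V=\diag\big(P(x_1),\dots,P(x_n)\big)$, but the two computations are identical.
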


\begin{proof}
We show that for $k=0,\dots, 2n-2$
\[
e_k:=\sum_{j=1}^n\varrho_j x_j^k %,\qquad k=0,\dots, 2n-2,
\]
is equal to $c_k$. In view of \eqref{eqPQS} and $Q(x_k)=0$
we have  $Q'(x_k)S(x_k)=P(x_k)$ and hence
\[
\varrho_k =\frac{1}{p(x_k)}=\frac{1}{p_{2n-2}Q'(x_k)S(x_k)},\qquad k=1,\dots, n.
\]
Putting $y=x_k$ in \eqref{eq37} we find
\[
B(x,x_k)=\sum_{i=0}^{n-1}\sum_{j=0}^{n-1}b_{ij}x^ix_k^j
=\frac{Q(x)S(x_k)-S(x)Q(x_k)}{x-x_k}=Q_k(x)Q'(x_k)S(x_k),
\]
where $Q_k$ are the Lagrange interpolation polynomials defined in \eqref{lagrange}.
Letting $x=x_l$ we obtain
\[
\sum_{i=0}^{n-1}x_l^i p_{2n-2}\varrho_k\sum_{j=0}^{n-1}b_{ij}x_k^j
=Q_k(x_l)=\delta_{kl},\qquad k,l=1,\dots, n,
\]
i.e., the matrix $Y$ with entries
\[
y_{ik}:=p_{2n-2}\varrho_k\sum_{j=0}^{n-1}b_{ij}x_k^j,\qquad
i=0,\dots, n-1, \;k=1,\dots, n,
\]
is a right inverse of the transposed Vandermonde matrix $V=V(x_1,\dots, x_n)^\top$.
Since $V$ is a square matrix, $Y$ is also left inverse to $V$, so that for
$k,l=0,\dots, n-1$
\begin{alignat*}{2}
\delta_{kl}&=\sum_{i=1}^{n}y_{ki}x_i^l=
\sum_{i=1}^{n}p_{2n-2}\varrho_i\sum_{j=0}^{n-1}b_{kj}x_i^jx_i^l\\
&=p_{2n-2}\sum_{j=0}^{n-1}b_{kj}\sum_{i=1}^{n}\varrho_ix_i^{j+l}
=p_{2n-2}\sum_{j=0}^{n-1}b_{kj}e_{j+l}.
\end{alignat*}
% \[
% \delta_{kl}=\sum_{i=1}^{n}y_{ki}x_i^l=
% \sum_{i=1}^{n}p_{2n-2}\varrho_i\sum_{j=0}^{n-1}b_{kj}x_i^jx_i^l
% =p_{2n-2}\sum_{j=0}^{n-1}b_{kj}\sum_{i=1}^{n}\varrho_ix_i^{j+l}
% =p_{2n-2}\sum_{j=0}^{n-1}b_{kj}e_{j+l}.
% \]
So the Hankel matrix $H(e)$ is inverse to the
Bezoutian $p_{2n-2}B=H(c)^{-1}$ and thus $e_k=c_k$ for
$k=0,\dots, 2n-2$.  \qed
\end{proof}

As a consequence of this lemma and \eqref{eq29} we get that $H(c)$ is
similar to a diagonal matrix with positive diagonal elements and
therefore positive definite. We have thus proved that
$c\in\interior{\cal M}_{2n-1}$ and hence the surjectivity of the
Nesterov mapping $N:\interior {\cal M}_{2n-1}\to\interior{\cal P}_{2n-1}$.

\begin{lemma}
If $t_1,\dots, t_{n-1}$ satisfy \eqref{eq17} and $\lambda,\sigma_1,\dots,\sigma_{n-1}$
are given by \eqref{eq.sigrho} (where $p_{2n-2}$ is the leading coefficient of $p$),
then the canonical representation \eqref{eq26} holds.
\end{lemma}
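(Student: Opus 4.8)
The plan is to follow the strategy of the preceding lemma. Write $p = p_{2n-2}P$ with $P$ the monic polynomial \eqref{eq13}, and recall that $H(c)^{-1} = p_{2n-2}B$ where $B = \bez(Q,S)$ (this was shown above, where we also saw $c\in\interior\mathcal M_{2n-1}$, so that $H(c)$ is positive definite). Let $\hat c\in\mathbb R^{2n-1}$ be the vector whose entries are the right-hand sides of \eqref{eq26} with the weights \eqref{eq.sigrho}; these weights are positive since $P>0$ on $\mathbb R$ and $p_{2n-2}>0$. Because the Hankel map $c\mapsto H(c)$ is injective, it is enough to prove $H(\hat c) = H(c)$, and since $H(c)^{-1} = p_{2n-2}B$ this amounts to checking that $H(\hat c)\,(p_{2n-2}B) = I_n$.

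Introduce the column vectors $u_t := (1,t,\dots,t^{n-1})^\top$ and $e := (0,\dots,0,1)^\top$ in $\mathbb R^n$. Then \eqref{eq26} reads $H(\hat c) = \sum_{k=1}^{n-1}\sigma_k\,u_{t_k}u_{t_k}^\top + \lambda\,e\,e^\top$, since $t_k^{i+j}$ with $i,j\le n-1$ reaches the exponent $2n-2$ only for $i=j=n-1$. The two ingredients I would compute are the rows $u_{t_k}^\top B$ and $e^\top B$. First, $B$ is symmetric (swap $x$ and $y$ in \eqref{eq37}), so $u_{t_k}^\top B$ is the coefficient vector of $y\mapsto B(t_k,y)=B(y,t_k)$; putting $y=t_k$ in \eqref{eq37} and using $S(t_k)=0$, the Lagrange polynomials \eqref{lagrange2}, and the Wronskian identity \eqref{eqPQS} in the form $P(t_k)=-S'(t_k)Q(t_k)$, one obtains $B(y,t_k)=P(t_k)\,S_k(y)$. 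Hence $u_{t_k}^\top B = P(t_k)\,\ell_k^\top$, where $\ell_k$ is the coefficient vector of $S_k$. Second, passing to the limit $y\to\infty$ in \eqref{eq37} (recall $\deg Q=n$, $\deg S=n-1$) gives $B(x,y)/y^{n-1}\to S(x)$, i.e.\ the row $e^\top B$ (the coefficients of $y^{n-1}$ in $B$) equals the coefficient vector $\ell_\infty^\top$ of $S$; in particular $b_{n-1,n-1}=1$.

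Now assemble: using $\sigma_k P(t_k)=1/p_{2n-2}$ and $\lambda=1/p_{2n-2}$,
\[
H(\hat c)\,(p_{2n-2}B) = p_{2n-2}\Big(\sum_{k=1}^{n-1}\sigma_k\,u_{t_k}\big(u_{t_k}^\top B\big) + \lambda\,e\big(e^\top B\big)\Big) = \sum_{k=1}^{n-1} u_{t_k}\ell_k^\top + e\,\ell_\infty^\top .
\]
The relations $S_j(t_k)=\delta_{jk}$, $S(t_k)=0$, the fact that $\deg S_j\le n-2$, and the monic normalization of $S$ say precisely that $\{u_{t_1},\dots,u_{t_{n-1}},e\}$ and $\{\ell_1,\dots,\ell_{n-1},\ell_\infty\}$ are dual bases of $\mathbb R^n$; hence $\sum_k u_{t_k}\ell_k^\top + e\,\ell_\infty^\top = I_n$. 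Therefore $H(\hat c)\,(p_{2n-2}B) = I_n$, so $H(\hat c)=H(c)$, i.e.\ $\hat c=c$, which is \eqref{eq26}.

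The step I expect to require the most care is the treatment of the mass at infinity: the Bezoutian is $n\times n$ while there are only $n-1$ finite equilibrium points, so one needs the ``point at infinity'' contributed by the $y\to\infty$ behaviour of $B$, and one must recognize that its dual partner among the coefficient vectors is $S$ itself, correctly normalized, with $\lambda$ entering only through the single antidiagonal entry $c_{2n-2}$. The rest is a transcription of the argument for the previous lemma, with \eqref{eqPQS} now replacing the identity $P(x_k)=Q'(x_k)S(x_k)$ used there.
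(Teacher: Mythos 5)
Your proof is correct and follows essentially the same route as the paper's: both rest on the identity $B(t_k,y)=-Q(t_k)S'(t_k)S_k(y)=P(t_k)S_k(y)$, the observation that the $y^{n-1}$-coefficient row of $B$ is the coefficient vector of $S$ (handling the mass at infinity), and the swap from a one-sided to a two-sided inverse of a square matrix, concluding that $H(\hat c)$ inverts $p_{2n-2}B=H(c)^{-1}$. Your packaging via the rank-one decomposition of $H(\hat c)$ and the dual-basis identity $\sum_k u_{t_k}\ell_k^\top+e\,\ell_\infty^\top=I_n$ is just a cleaner rephrasing of the paper's entrywise computation with the matrix $Y$ and the augmented Vandermonde-type matrix.
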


\begin{proof}
We set
\begin{equation} \label{eq.deff}
f_k:=\sum_{j=1}^{n-1}\sigma_jt_j^k+\lambda\delta_{k,2n-2},\qquad k=0,\dots, 2n-2,
\end{equation}
and show that $f_k=c_k$ for all $k$. Note that in view of \eqref{eqPQS}
and $S(t_k)=0$
\begin{equation} \label{eq.Sigma}
\sigma_k=\frac{1}{p(t_k)}=-\frac{1}{p_{2n-2}Q(t_k)S'(t_k)},\qquad k=1,\dots, n-1.
\end{equation}
Setting $x=t_k$ in \eqref{eq37} we get
\begin{equation} \label{eq38}
B(t_k,y)=
\sum_{i=0}^{n-1}\sum_{j=0}^{n-1}b_{ij}t_k^i y^j
=\frac{Q(t_k)S(y)-S(t_k)Q(y)}{t_k-y}
=-Q(t_k)S'(t_k)S_k(y),
\end{equation}
where $S_k$ are the Lagrange interpolation polynomials defined in \eqref{lagrange2}.
Putting $y=t_l$ and using \eqref{eq.Sigma} we obtain
\[
\sum_{i=0}^{n-1}t_k^i p_{2n-2}\sigma_k\sum_{j=0}^{n-1} b_{ij}t_l^j
=\delta_{kl}, \qquad k,l=1,\dots, n-1,
\]
which can be rewritten as
\begin{equation} \label{eq39}
\sum_{j=0}^{n-1}t_l^jy_{jk}=\delta_{kl}, \qquad  k,l=1,\dots, n-1,
\end{equation}
where
\begin{equation} \label{eq.yjk}
y_{jk}:=p_{2n-2}\sigma_k\sum_{i=0}^{n-1}b_{ij}t_k^i, \qquad
j=0,\dots, n-1,\quad k=1,\dots, n-1.
\end{equation}
Since the $S_k$ are polynomials of degree $n-2$, comparing
coefficients of $y^{n-1}$ in \eqref{eq38} yields that
\begin{equation} \label{eq.zero}
\sum_{i=0}^{n-1}b_{i,n-1}t_k^i=0, \qquad k=1,\dots, n-1.
\end{equation}
Defining $y_{in}:=p_{2n-2}b_{i, n-1}$ for $i=0,\dots, n-1$ this can be written as
\begin{equation} \label{eq40}
\sum_{i=0}^{n-1}t_k^i\,y_{in}=0,\qquad k=1,\dots, n-1.
\end{equation}
We also have (see \eqref{eq.yjk} and \eqref{eq.zero})
\begin{equation}\label{eq41}
y_{n-1,k}=p_{2n-2}\sigma_k\sum_{i=0}^{n-1}b_{i,n-1}t_k^i=0,\qquad
k=1,\dots, n-1.
\end{equation}
Since $b_{n-1,n-1}$ is the leading coefficient of the monic
polynomial $H^*(B)=P$ we get
\[
y_{n-1,n}=p_{2n-2}b_{n-1,n-1}=p_{2n-2}=\frac{1}{\lambda}.
\]
This, together with the equations \eqref{eq39}, \eqref{eq40}, and \eqref{eq41},
implies that the matrix $Y$ with entries $y_{ik}$ ($i=0,\dots, n-1$, $k=1,\dots, n$)
is right inverse to the matrix
\[
\begin{bmatrix}
1&t_1&\cdots& t_1^{n-2}&t_1^{n-1}\\
\vdots&\vdots&&\vdots&\vdots\\
1&t_{n-1}&\cdots& t_{n-1}^{n-2}&t_{n-1}^{n-1}\\
0&0&\cdots&0&\lambda
\end{bmatrix}.
\]
Since both matrices are square, $Y$ is also a left
inverse. Inserting the definitions of $y_{ki}$, taking into account that
$b_{jk}=b_{kj}$, and recalling the definition \eqref{eq.deff} of $f_k$,
we find for $k,l=0,\dots, n-1$
\begin{align*} %OK
\delta_{kl}&=\sum_{i=1}^{n-1}y_{ki}\,t_i^l+\lambda y_{kn}\delta_{l,n-1}
=\sum_{i=1}^{n-1}t_i^l\,p_{2n-2}\sigma_i\sum_{j=0}^{n-1}b_{jk}t_i^j
+\lambda\delta_{l,n-1}p_{2n-2}b_{k,n-1}\\
&=p_{2n-2}\sum_{j=0}^{n-1}b_{kj}\sum_{i=1}^{n-1}\sigma_it_i^{l+j}
+p_{2n-2}\lambda\sum_{j=0}^{n-1}\delta_{l+j,2n-2}b_{kj}\\
&=p_{2n-2}\sum_{j=0}^{n-1}b_{kj}\left(\sum_{i=1}^{n-1}\sigma_it_i^{l+j}
+\lambda\delta_{l+j,2n-2}\right)=p_{2n-2}\sum_{j=0}^{n-1}b_{kj}f_{l+j}.
\end{align*}
Hence the Hankel matrix $H(f)$ is the inverse of the Bezoutian
$p_{2n-2}B=H(c)^{-1}$ and thus $f_k=c_k$ for $k=0,\dots, 2n-2$.  \qed
\end{proof}

\begin{lemma} \label{lem.converse}
Let $c\in \interior {\cal M}_{2n-1}$ be any moment vector such that $N(c)=p$.
If \eqref{eq26} and \eqref{eq28} are satisfied for $x_1,\dots,x_n, t_1,\dots,t_{n-1}$
and positive numbers $\sigma_k,\varrho_k,\lambda$, then also  \eqref{eq10}, \eqref{eq17},
and \eqref{eq.sigrho} hold.
\end{lemma}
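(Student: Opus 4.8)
The plan is to extract the equilibrium conditions directly from the Vandermonde‑type factorizations of the Hankel matrix $H(c)$ carried by the two representations, so that no interlacing or uniqueness statement is needed. I would first treat \eqref{eq28}. Written as in \eqref{eq29}, it reads $H(c)=VDV^\top$ with $V=V(x_1,\dots,x_n)$ and $D=\diag(\varrho_1,\dots,\varrho_n)$; since the $x_k$ are distinct, $V$ is invertible and $H(c)^{-1}=V^{-\top}D^{-1}V^{-1}$. The point is that $V^\top$ is the operator of evaluation at $x_1,\dots,x_n$, so the columns of $V^{-\top}$ are the coefficient vectors of the Lagrange polynomials $Q_1,\dots,Q_n$ from \eqref{lagrange}. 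Interpreting matrices as bivariate polynomials as in Section~\ref{convex}, this gives $H(c)^{-1}(x,y)=\sum_{k=1}^n\varrho_k^{-1}Q_k(x)Q_k(y)$, and applying $H^*$, i.e.\ equating the two variables as in \eqref{eq34}, yields
\[
p(x)=N(c)(x)=\sum_{k=1}^n\frac{1}{\varrho_k}\,Q_k(x)^2.
\]
Evaluating this at $x=x_j$ and using $Q_k(x_j)=\delta_{kj}$ gives $\varrho_j=1/p(x_j)$ at once, and substituting back and dividing by the leading coefficient $p_{2n-2}$ turns the displayed identity into $P(x)=\sum_{k=1}^n P(x_k)Q_k(x)^2$, which is \eqref{eq5}. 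By the reasoning of Section~\ref{sec1} together with Lemma~\ref{l5}, \eqref{eq5} is equivalent to \eqref{eq10}. This disposes of the $x_k$-part.

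For the $t_k$-part I would repeat the argument, the only new feature being the mass $\lambda$ at infinity. Rewriting \eqref{eq26} as a matrix identity gives $H(c)=\widehat{V}\,\widehat{D}\,\widehat{V}^\top$, where $\widehat{V}\in\mathbb{R}^{n\times n}$ has columns $(1,t_j,\dots,t_j^{n-1})^\top$ for $j=1,\dots,n-1$ and last column $(0,\dots,0,1)^\top$, and $\widehat{D}=\diag(\sigma_1,\dots,\sigma_{n-1},\lambda)$. Expanding along the last column shows $\det\widehat{V}$ equals, up to sign, the Vandermonde determinant of $t_1,\dots,t_{n-1}$, so $\widehat{V}$ is invertible and $H(c)^{-1}=\widehat{V}^{-\top}\widehat{D}^{-1}\widehat{V}^{-1}$. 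Now the first $n-1$ columns of $\widehat{V}^{-\top}$ are the Lagrange polynomials $S_1,\dots,S_{n-1}$ from \eqref{lagrange2} (degree $n-2$, equal to $\delta_{kj}$ at $t_j$), while the last column is the \emph{monic} polynomial $S$ from \eqref{eq14}, since that is the unique polynomial of degree $\le n-1$ vanishing at all $t_j$ with leading coefficient $1$. Applying $H^*$ gives
\[
p(x)=N(c)(x)=\sum_{k=1}^{n-1}\frac{1}{\sigma_k}\,S_k(x)^2+\frac{1}{\lambda}\,S(x)^2.
\]
Evaluating at $x=t_j$ yields $\sigma_j=1/p(t_j)$, and comparing the coefficients of $x^{2n-2}$, to which only the term $\lambda^{-1}S^2$ contributes (the $S_k^2$ having degree $2n-4$), gives $p_{2n-2}=1/\lambda$; together with the previous paragraph this establishes all of \eqref{eq.sigrho}. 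Substituting back and dividing by $p_{2n-2}$ turns the identity into $P(x)=S(x)^2+\sum_{k=1}^{n-1}P(t_k)S_k(x)^2$, i.e.\ \eqref{eq15}, which by the reasoning of Section~\ref{sec1} and Lemma~\ref{l5} is equivalent to \eqref{eq17}.

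I expect the only real obstacle to be bookkeeping: one must correctly identify the columns of the inverse of the (ordinary and generalized) Vandermonde matrices with the Lagrange polynomials, and in particular handle the ``node at infinity'' so that the last column of $\widehat{V}^{-\top}$ comes out as the monic polynomial $S$ rather than as a Lagrange polynomial --- it is precisely this feature that produces the identity $\lambda=1/p_{2n-2}$. The remaining ingredients (invertibility of $V$ and $\widehat{V}$, the fact that $N(c)$ is well defined because $c\in\interior{\cal M}_{2n-1}$ forces $H(c)$ to be positive definite, and the equivalences \eqref{eq5}$\Leftrightarrow$\eqref{eq10} and \eqref{eq15}$\Leftrightarrow$\eqref{eq17}) are all already available from the earlier parts of the paper.
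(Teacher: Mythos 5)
Your proof is correct, but it follows a genuinely different route from the paper's. The paper starts from the fact that $H(c)^{-1}$ is a Bezoutian, writes $H(c)^{-1}=\tilde p_{2n-2}\bez(Q,S)$ for two \emph{a priori unknown} monic polynomials, uses the two canonical representations to derive relations \eqref{eq43}--\eqref{eq.lambda}, reconstructs from these that the zeros of $S$ are the $t_k$ and that $Q$ may be replaced by $\prod(x-x_k)$, and only then passes to the rational functions $g=Q_1/S$ and $f=-1/g$ so as to invoke Lemmas~\ref{l3}, \ref{l4} and \ref{l5}. You instead factorize $H(c)$ itself: the Vandermonde factorization \eqref{eq29} for the $x$-representation and its ``node at infinity'' analogue for the $t$-representation, invert, recognize the columns of $V^{-\top}$ (resp.\ $\widehat V^{-\top}$) as the coefficient vectors of the Lagrange polynomials $Q_k$ (resp.\ $S_k$ and the monic $S$), and apply $H^*$ to land directly on the weighted sum-of-squares identities $p=\sum\varrho_k^{-1}Q_k^2$ and $p=\lambda^{-1}S^2+\sum\sigma_k^{-1}S_k^2$, i.e.\ \eqref{eq5} and \eqref{eq15} after normalization; evaluation at the nodes and comparison of leading coefficients give \eqref{eq.sigrho} immediately, and the equivalences \eqref{eq5}$\Leftrightarrow$\eqref{eq10}, \eqref{eq15}$\Leftrightarrow$\eqref{eq17} established in Section~\ref{sec1} and Lemma~\ref{l5} finish the argument. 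This is essentially the Hachez--Nesterov ``sum of squares of Lagrange polynomials'' viewpoint that the paper mentions but does not exploit in this proof; it is shorter, avoids reconstructing the Bezoutian polynomials and the rational functions $f,g$, and makes the origin of the weights transparent. What it gives up is the explicit re-derivation of the link to the functions $f$ and $g$ (and hence to the Blaschke product), which the paper's version keeps in view. One small point you should make explicit: the distinctness of the $x_k$ (and of the $t_k$), which you need for the invertibility of $V$ and $\widehat V$, is not part of the hypothesis of the lemma but follows from the positive definiteness of $H(c)=VDV^\top$ forced by $c\in\interior{\cal M}_{2n-1}$.
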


\begin{proof}
Since  $H({c} )^{-1}$ is a Bezoutian, there are monic polynomials ${Q}$ and
${S}$ with $H({c} )^{-1}=\tilde{p}_{2n-2}\bez({Q},{S})$ for some constant
$\tilde{p}_{2n-2}\neq 0$ (that will turn out to be the leading coefficient of $p$).
Since the Bezoutian ${B} :=\bez ({Q},{S})$
does not change if we add a multiple of ${Q}$ to ${S}$, we can assume that
$\deg {S}<\deg{Q}\leq n$. Now we have
\begin{equation} \label{eq.pxb}
p(x)=N({c})(x)=H^*(H({c})^{-1})(x) =\tilde{p}_{2n-2}(H^*{B})(x)=\tilde{p}_{2n-2}{B}(x,x).
\end{equation}
Starting from
\[
\tilde{p}_{2n-2}\sum_{j=0}^{n-1}{b}_{kj}{c}_{j+l} =\delta_{kl}, \qquad k,l=0,\dots, n-1,
\]
plugging in
\begin{equation}\label{eq42}
{c}_k=\sum_{j=1}^{n-1}{\sigma}_j{t}_j^k +\delta_{2n-2,k}{\lambda}
=\sum_{j=1}^n{\varrho}_j{x}_j^k,\qquad k=0,\dots, 2n-2,
\end{equation}
and reversing the above computations we arrive at
\begin{alignat}{2}
\label{eq43}
\tilde{p}_{2n-2}\sum_{i=0}^{n-1}\sum_{j=0}^{n-1} {b}_{ij}{x}_{l}^i{x}_k^j
&=\frac{\delta_{kl}}{{\varrho}_k}, \qquad &&k,l=1,\dots, n,\\
\label{eq44}
\tilde{p}_{2n-2}\sum_{i=0}^{n-1}\sum_{j=0}^{n-1} {b}_{ij}{t}_{k}^i{t}_l^j
&=\frac{\delta_{kl}}{{\sigma}_k}, \qquad &&k,l=1,\dots, n-1,\\
\label{eq45}
\sum_{i=0}^{n-1}{b}_{i,n-1}{t}_k^i &=0, \qquad &&k=1,\dots, n-1,\\
%\intertext {and}
\label{eq.lambda}
{\lambda}&=\frac{1}{\tilde{p}_{2n-2}}.
\end{alignat}
By \eqref{eq43}, the polynomial $x\mapsto {Q}(x){S}({x}_k)-{Q}({x}_k){S}(x)$
vanishes for $x=x_l$ and has leading coefficient ${S}({x}_k)$, hence
\[
{Q}(x){S}({x}_k)-{Q}({x}_k){S}(x) ={S}({x}_k)\prod_{l=1}^n(x-{x}_l),
\qquad k=1,\dots, n.
\]
For fixed $x\in\mathbb R$, the polynomial
\[
y\mapsto {Q}(x){S}(y)-{Q}(y){S}(x) -{S}(y)\prod_{l=1}^n(x-{x}_l),
\qquad k=1,\dots, n,
\]
vanishes for $y={x}_k$ and has leading coefficient $-{S}(x)$,
hence we find
\begin{equation}\label{eq46}
{Q}(x){S}(y)-{Q}(y){S}(x)
={S}(y)\prod_{l=1}^n(x-{x}_l)-{S}(x)\prod_{l=1}^n(y-{x}_l).
\end{equation}
By \eqref{eq45}, the coefficient of $y^{n-1}$ in the polynomial
$y\mapsto {B}({t}_k, y)$ vanishes, hence
\[
0=\lim_{y\to\infty}\frac{{B}({t}_k,y)}{y^{n-1}}
=\lim_{y\to\infty}\frac{{S}(y)\prod_{l=1}^n({t}_k-{x}_l)
-{S}({t}_k)\prod_{l=1}^n(y-{x}_l)}{({t}_k-y)y^{n-1}}
={S}({t}_k)
\]
for $k=1,\dots, n-1$, so that
\[
{S}(x)=\prod_{k=1}^{n-1} (x-{t}_k).
\]
By \eqref{eq46} we therefore have
\[
{B}=\bez({Q}_1,{S}), \qquad {Q}_1(x):=\prod_{k=1}^n(x-{x}_k),
\]
and in particular (see \eqref{eq.pxb})
\[
p(x)=\tilde{p}_{2n-2}{B}(x,x)=\tilde{p}_{2n-2} ({S}(x){Q}'_1(x)-{Q}_1(x){S}'(x)).
\]
Hence $\tilde{p}_{2n-2}$ is the  leading coefficient of $p$ and we can write
$\tilde{p}_{2n-2}=p_{2n-2}$. If we put now
\[
{g}(x):=\frac{{Q_1}(x)}{{S}(x)},
\]
and recall that $p(x)=p_{2n-2}P(x)$, we find
\[
{g}'(x)=\frac{ {Q}'_1(x){S}(x) -{Q}_1(x){S}'(x)}{{S}(x)^2} =\frac{P(x)}{{S}(x)^2},
\]
i.e., ${g}$ is a real rational function of the form \eqref{eq11}
having the critical points $\zeta_k,\overline{\zeta}_k$.
Also $s_k>0$ and $a>0$ in \eqref{eq11}  since $g'(x)>0$ and thus $g$ is strictly increasing
in each interval without poles. Using Lemma~\ref{l4} and Lemma~\ref{l5} we conclude that
\eqref{eq17} is satisfied. Consequently, the function $f(x):=-1/g(x)$ is of the form
\eqref{eq1} with $r_k>0$, and has the same critical points $\zeta_k, \overline{\zeta}_k$
as $g$. By Lemma~\ref{l3} and Lemma~\ref{l5}, the equilibrium equation \eqref{eq10} is satisfied.
From equation~\eqref{eq44} with $k=l$ we get %the unique values of the $\sigma_k$,
\[
{\sigma_k}=\frac{1}{p_{2n-2}B(t_k,t_k)} = \frac{1}{p(t_k)},\qquad k=1,\dots, n-1,
\]
and from \eqref{eq43} with $k=l$ we obtain
\[
{\varrho_k}=\frac{1}{p_{2n-2}B(x_k,x_k)} = \frac{1}{p(x_k)},\qquad k=1,\dots, n.
\]
Finally, by \eqref{eq.lambda} we have $\lambda=1/p_{2n-2}$, so that \eqref{eq.sigrho} has
been verified.  \qed
\end{proof}

\begin{lemma}
The Nesterov mapping $N:\interior {\cal M}_{2n-1}\to\interior{\cal P}_{2n-1}$ is  bijective.
\end{lemma}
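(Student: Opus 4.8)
The plan is to read off bijectivity from the lemmas already established in this section, together with the uniqueness statements of Theorem~\ref{Th2} and Theorem~\ref{Th3}. \emph{Surjectivity} is in fact already in hand: the first lemma of this section furnishes, for every $p\in\interior {\cal P}_{2n-1}$, a vector $c\in\mathbb R^{2n-1}$ with $N(c)=p$, and the two subsequent lemmas show that this $c$ satisfies the canonical representation \eqref{eq28}, whence $H(c)=VDV^\top$ with $D$ a positive diagonal matrix; so $H(c)$ is positive definite and $c\in\interior {\cal M}_{2n-1}$. Thus only injectivity needs an argument.

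For \emph{injectivity}, suppose $c,c'\in\interior {\cal M}_{2n-1}$ both satisfy $N(c)=N(c')=p$. Since $p\in\interior {\cal P}_{2n-1}$ is strictly positive on $\mathbb R$ and of exact degree $2n-2$, its zeros split into conjugate pairs $\zeta_k,\overline{\zeta}_k$ with $\zeta_k\in\mathbb H$, so Theorem~\ref{Th2} applies to these $\zeta_k$. By Theorem~\ref{Th3}, $c$ has a canonical representation \eqref{eq26} with uniquely determined nodes $t_1<\dots<t_{n-1}$, weights $\sigma_k>0$, and mass $\lambda>0$, as well as a representation \eqref{eq28}; hence Lemma~\ref{lem.converse} is applicable and yields that $t_1<\dots<t_{n-1}$ solves the equilibrium equation \eqref{eq17} and that $\sigma_k=1/p(t_k)$, $\lambda=1/p_{2n-2}$. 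The very same conclusions hold for $c'$, with nodes $t_k'$, weights $\sigma_k'$, and mass $\lambda'$.

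Now Theorem~\ref{Th2}(i) asserts that \eqref{eq17} has a \emph{unique} increasing solution for the given $\zeta_k$, so $t_k=t_k'$ for all $k$; consequently $\sigma_k=1/p(t_k)=1/p(t_k')=\sigma_k'$ and $\lambda=1/p_{2n-2}=\lambda'$. Substituting these data back into \eqref{eq26} gives $c_k=\sum_{j=1}^{n-1}\sigma_j t_j^k+\lambda\delta_{k,2n-2}=c_k'$ for $k=0,\dots,2n-2$, i.e.\ $c=c'$. Together with the surjectivity noted above, this shows that $N:\interior {\cal M}_{2n-1}\to\interior {\cal P}_{2n-1}$ is a bijection. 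The whole argument is essentially bookkeeping among the earlier lemmas; the only point worth a moment's attention is verifying that the zeros of an interior point $p$ of ${\cal P}_{2n-1}$ lie in $\mathbb H\cup\overline{\mathbb H}$ so that Theorem~\ref{Th2} may be invoked, and I do not anticipate any genuine obstacle there.
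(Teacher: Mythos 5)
Your proof is correct and follows essentially the same route as the paper: surjectivity is read off from the earlier lemmas of the section, and injectivity is obtained by passing from the canonical representations of Theorem~\ref{Th3} through Lemma~\ref{lem.converse} to the equilibrium equation \eqref{eq17}, whose unique solvability (Lemma~\ref{l6}, restated in Theorem~\ref{Th2}(i)) pins down the nodes, hence via \eqref{eq.sigrho} the weights, hence $c$. The only cosmetic difference is that you compare two preimages $c,c'$ where the paper argues directly that any preimage is uniquely determined; the content is identical.
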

\begin{proof}
The surjectivity of $N$ has already been shown. To prove its injectivity we assume that
$N(c)=p$ and show that $c\in \interior {\cal M}_{2n-1}$ is unique. By Theorem \ref{Th3}
there are numbers $x_1<t_1<x_2<\dots<t_{n-1}<x_n$ and $\sigma_k,\varrho_k,\lambda>0$
such that
\begin{equation}\label{eq.2canrep}
c_k=\sum_{j=1}^{n-1}\sigma_jt_j^k +\delta_{2n-2,k}\lambda =\sum_{j=1}^n\varrho_jx_j^k,\qquad
k=0,\dots, 2n-2.
\end{equation}
By Lemma \ref{lem.converse}, $(t_1, \dots, t_{n-1})$
is a solution of the equilibrium equation \eqref{eq17}. Since this equation has a unique
solution by Lemma \ref{l6}, we conclude that  $t_1, \dots, t_{n-1}$ are uniquely determined.
Lemma \ref{lem.converse} tells us that \eqref{eq.sigrho} holds, from which we get
unique values of $\lambda$ and $\sigma_1,\dots, \sigma_{n-1}$. Finally, \eqref{eq.2canrep}
determines $c_1, \dots, c_{2n-2}$ uniquely.  \qed
\end{proof}

With the preceding lemma the proof of Theorem \ref{thm4} has been completed.

%-----------------------------------------------------------------------------

\section{Concluding remarks} \label{conclusion}

\noindent
In this paper we have established a one-to-one relation between three problems:
\begin{itemize}
\itemsep0mm
\item[{\rm(i)}]
determining a finite Blaschke product from its critical points,
% $\zeta_k$,
\item[{\rm(ii)}]
finding the equilibrium position of moveable unit charges on the real line
in an electric field generated by a special configuration of negative  point charges,
\item[{\rm(iii)}]
solving the moment problems \eqref{eq26}, \eqref{eq28}.
\end{itemize}
\noindent
Algorithmically, the last problem requires the Vandermonde factorization of the associated
Hankel matrix, but it is difficult to exploit this for solving problems (i)
or (ii). While the transition between (i) and (ii) in both directions is based on simple
transformations,
the translation of (ii) into  (iii) needs the construction of $c\in\interior{\cal M}_{2n-1}$
with $N(c)=p$ for a positive polynomial $p$ with zeros at the given critical points
$\zeta_k,\overline{\zeta}_k$. Hence we have to invert the Nesterov mapping
$N:\interior {\cal M}_{2n-1} \rightarrow \interior {\cal P}_{2n-1}$.
This can be interpreted as finding a positive definite Bezoutian with prescribed anti-diagonal sums
$p\in \interior{\cal P}_{2n-1}$.
Though this problem has a unique solution for every $p\in \interior
{\cal P}_{2n-1}$, we are not aware of an efficient procedure to find it.
On the other hand, $N^{-1}$ can be computed \emph{indirectly},
if problem  (ii) can be solved: starting with (a coefficient vector
of) a positive polynomial $p$, find the corresponding equilibrium positions $t_k$, and
compute $\lambda$ and the weights $\sigma_k$ from  \eqref{eq.sigrho}. The solution
$c\in\interior {\cal M}_{2n-1}$ is then obtained by evaluating  \eqref{eq26}.

We shall discuss these algorithmic and numerical aspects in more detail
in a forthcoming paper.

\bibliographystyle{amsplain}
\bibliography{crit_lib}

\providecommand{\bysame}{\leavevmode\hbox to3em{\hrulefill}\thinspace}
\providecommand{\MR}{\relax\ifhmode\unskip\space\fi MR }
% \MRhref is called by the amsart/book/proc definition of \MR.
\providecommand{\MRhref}[2]{%
  \href{http://www.ams.org/mathscinet-getitem?mr=#1}{#2}
}
\providecommand{\href}[2]{#2}
\begin{thebibliography}{10}

\bibitem{akr}
N.~I. Ahiezer and M.~Kre\u{\i}n, \emph{Some {Q}uestion in the {T}heory of
  {M}oments}, Translations of mathematical monographs, vol.~2, American
  Mathematical Society, 1962.

\bibitem{bar}
A.~Barvinok, \emph{A {C}ourse in {C}onvexity}, Graduate {S}tudies in
  {M}athematics, vol.~54, American Mathematical Society, 2002.

\bibitem{bou}
T.~Bousch, \emph{Sur quelques probl\`{e}mes de dynamique holomorphe}, Ph.D.
  thesis, Universit\'e Paris 11, Orsay, 1992.

\bibitem{bova}
St. Boyd and L.~Vandenberghe, \emph{Convex {O}ptimization}, Cambridge
  University Press, 2004.

\bibitem{diva}
D.~K. Dimitrov and W.~Van~Assche, \emph{{L}am\'e differential equations and
  electrostatics}, Proc. Amer. Math. Soc. \textbf{128} (2000), 3621--3628.

\bibitem{dim2}
\bysame, \emph{Erratum to "{L}am\'{e} differential equations and
  electrostatics"}, Proc. Amer. Math. Soc. \textbf{131} (2003), no.~7, 2303.

\bibitem{fuh}
P.~A. Fuhrmann, \emph{A {P}olynomial {A}pproach to {L}inear {A}lgebra},
  Springer, 2012.

\bibitem{gorh}
P.~Gorkin and R.C. Rhoades, \emph{Boundary interpolation by finite {B}laschke
  products}, Constr. Approx. \textbf{27} (2008), 75--98.

\bibitem{gri}
A.~Grinshpan, \emph{A minimum energy problem and {D}irichlet spaces}, Proc.
  Amer. Math. Soc. \textbf{130} (2002), no.~2, 453--460.

\bibitem{gru}
F.~A. Gr\"unbaum, \emph{Variations on a theme of {H}eine and {S}tieltjes: an
  electrostatic interpretation of the zeros of certain polynomials}, J. Comput.
  Appl. Math. \textbf{99} (1998), 189--194.

\bibitem{gru1}
\bysame, \emph{Electrostatic interpretation for the zeros of certain
  polynomials and the {D}arboux process}, J. Comput. Appl. Math. \textbf{133}
  (2001), 397--412.

\bibitem{hane}
Y.~Hachez and Yu. Nesterov, \emph{Optimization problems over non-negative
  polynomials with interpolation constraints}, Positive {P}olynomials in
  {C}ontrol (D.~Henrion and A.~Garulli, eds.), Lecture notes in control and
  information sciences, vol. 312, Springer, 2005, pp.~239--271.

\bibitem{hero2}
G.~Heinig and K.~Rost, \emph{Algebraic {M}ethods for {T}oeplitz-like {M}atrices
  and {O}perators}, Mathematical Research, vol.~19, Akademie-Verlag, Berlin,
  1984.

\bibitem{hero1}
\bysame, \emph{Introduction to {B}ezoutians}, Numerical {M}ethods for
  {S}tructured {M}atrices and {A}pplications. The Georg Heinig memorial volume
  (D.~A. Bini et~al., eds.), Operator Theory: Advances and Applications, vol.
  199, Birkh{\"a}user, 2010, pp.~25--118.

\bibitem{hei}
M.~Heins, \emph{On a class of conformal metrics}, Nagoya Math. J. \textbf{21}
  (1962), 1--60.

\bibitem{ioh}
I.~S. Iohvidov, \emph{Hankel and {T}oeplitz {M}atrices and {F}orms}, Nauka,
  Moscow, 1974, Russian.

\bibitem{ism}
M.~E.~H. Ismail, \emph{An electrostatic model for zeros of general orthogonal
  polynomials}, Pacific J. Math. \textbf{193} (2000), 355--369.

\bibitem{kast}
S.~Karlin and W.~J. Studden, \emph{Tschebycheff {S}ystems: with {A}pplications
  in {A}nalysis and {S}tatistics}, Pure and applied mathematics, vol.~15,
  Interscience Publishers, John Wiley \& Sons, New York, 1966.

\bibitem{kr}
D.~Kraus and O.~Roth, \emph{Critical points of inner functions, nonlinear
  partial differential equations, and an extension of {L}iouville's theorem},
  J. Lond. Math. Soc. \textbf{77} (2008), no.~1, 183--202.

\bibitem{kr2}
D.~Kraus and O.~Roth, \emph{Critical points, the {G}auss curvature equation and
  {B}laschke products}, Fields Institute Comm. \textbf{65} (2012), 133--157.

\bibitem{krnu}
M.~G. Kre\u{\i}n and A.~A. Nudel'man, \emph{The {M}arkov {M}oment {P}roblem and
  {E}xtremal {P}roblems}, Translations of {M}athematical {M}onographs, vol.~50,
  American Mathematical Society, 1977.

\bibitem{mmm}
F.~Marcell\'{a}n, A.~Mart\'inez-Finkelshtein, and P.~Mart\'inez-Gonz\'{a}lez,
  \emph{Electrostatic models for zeros of polynomials: Old, new, and some open
  problems}, J. Comput. Appl. Math. \textbf{207} (2007), 258--272.

\bibitem{mara}
A.~Mart\'inez-Finkelshtein and E.~A. Rakhmanov, \emph{Critical measures,
  quadratic differentials, and weak limits of zeros of {S}tieltjes
  polynomials}, Commun. Math. Phys. \textbf{302} (2011), 53--111.

\bibitem{Mash}
J.~Mashreghi, \emph{Derivatives of {I}nner {F}unctions}, Fields {I}nstitute
  {M}onographs, vol.~31, Springer, 2012.

\bibitem{mba}
T.~McMillen, A.~Bourget, and A.~Agnew, \emph{On the zeros of complex {V}an
  {V}leck polynomials}, J. Comput. Appl. Math. \textbf{223} (2009), 862--871.

\bibitem{neh}
Z.~Nehari, \emph{A generalization of {S}chwarz' lemma}, Duke. Math. J.
  \textbf{5} (1946), 118--131.

\bibitem{nest}
Yu. Nesterov, \emph{Squared functional systems and optimization problemes},
  High {P}erformance {O}ptimization (H.~Frenk et~al., eds.), Appl. Optim.,
  vol.~33, Kluwer Academic Publishers, 2000, pp.~405--440.

\bibitem{nene}
Yu.~E. Nesterov and A.~S. Nemirovskii, \emph{Interior-point {P}olynomial
  {A}lgorithms in {C}onvex {P}rogramming}, Studies in Applied Mathematics,
  vol.~13, SIAM Society for Industrial and Applied Mathematics, 1994.

\bibitem{org}
R.~Orive and Z.~Garc\'{i}a, \emph{On a class of equilibrium problems on the
  real axis}, J. Comput. Appl. Math. \textbf{235} (2010), no.~4, 1065--1076.

\bibitem{osl1}
R.~Orive and J.~S\'{a}nchez-Lara, \emph{Equilibrium measures in the presence of
  weak rational external fields}, Preprint. arXiv:1605.01909 [math.{CV}].

\bibitem{osl}
\bysame, \emph{Equilibrium measures in the presence of certain rational
  external fields}, J. Math. Anal. Appl. \textbf{431} (2015), 1224--1252.

\bibitem{ren}
J.~Renegar, \emph{A {M}athematical {V}iew of {I}nterior-point {M}ethods in
  {C}onvex {O}ptimization}, MPS-SIAM series on optimization, SIAM Society for
  Industrial and Applied Mathematics, 2001.

\bibitem{sasu}
D.~Sarason and D.~Suarez, \emph{Inverse problem for the zeros of certain
  {K}oebe-related functions}, Journal d'Analyse Math\'ematique \textbf{71}
  (1997), 149--158.

\bibitem{sewe}
G.~Semmler and E.~Wegert, \emph{Boundary interpolation with {B}laschke products
  of minimal degree}, Comput. Methods Funct. Theory \textbf{6} (2006), no.~2,
  493--511.

\bibitem{shap}
B.~Shapiro, \emph{Algebro-geometric aspects of {H}eine-{S}tieltjes theory}, J.
  Lond. Math. Soc. \textbf{83} (2011), no.~1, 36--56.

\bibitem{ste}
K.~Stephenson, \emph{Introduction to {C}ircle {P}acking: the {T}heory of
  {D}iscrete {A}nalytic {F}unctions}, Cambridge University Press, New York,
  2005.

\bibitem{sti}
T.~J. Stieltjes, \emph{Sur certains polyn\^{o}mes qui v\'erifient une
  \'equation diff\'erentielle lin\'eaire du second ordre et sur la th\'eorie
  des fonctions de {L}am\'e}, Acta Math. \textbf{6} (1885), 321--326.

\bibitem{sze}
G.~Szeg\H{o}, \emph{Orthogonal {P}olynomials}, 4th ed., Amer. Math. Soc.
  Colloq. Publ., vol.~23, American Mathematical Society, 1975.

\bibitem{wan}
Q.~Wang and J.~Peng, \emph{On critical points of finite {B}laschke products and
  the equation $\triangle u=\exp 2u$}, Kexue Tongbao \textbf{24} (1979),
  583--586, Chinese.

\bibitem{zak}
S.~Zakeri, \emph{On critical points of proper holomorphic maps on the unit
  disc}, Bull. London Math. Soc. \textbf{30} (1996), 62--66.

\end{thebibliography}

\end{document}